\documentclass[11pt]{amsart}

\usepackage{amsmath, amscd, amssymb}
\usepackage[frame,cmtip,arrow,matrix,line,graph,curve]{xy}
\usepackage{graphpap, color}
\usepackage[mathscr]{eucal}
\usepackage{cancel}
\usepackage{verbatim}
 


\setlength{\evensidemargin}{\oddsidemargin}

\numberwithin{equation}{section}

\def\sO{{\mathscr O}}

\newcommand{\CC}{\mathbb{C}}

\newcommand{\PP}{\mathbb{P}}
\newcommand{\QQ}{\mathbb{Q}}

\newcommand{\cal}{\mathcal}

\def\cC{{\cal C}}

\def\cE{{\cal E}}
\def\cF{{\cal F}}
\def\cG{{\cal G}}

\def\cV{{\cal V}}

\def\cI{{\cal I}}


\def\fN{\mathfrak{N}}

\def\loc{\mathrm{loc}}

\def\and{\quad{\rm and}\quad}
\def\lra{\longrightarrow }
\def\mapright#1{\,\smash{\mathop{\lra}\limits^{#1}}\,}
\def\mapleft#1{\,\smash{\mathop{\longleftarrow}\limits^{#1}}\,}

\newtheorem{prop}{Proposition}[section]
\newtheorem{theo}[prop]{Theorem}
\newtheorem{lemm}[prop]{Lemma}

\newtheorem{exam}[prop]{Example}
\newtheorem{ques}[prop]{Question}
\newtheorem{defi}[prop]{Definition}

\newtheorem{assu}[prop]{Assumption}

\def\beq{\begin{equation}}
\def\eeq{\end{equation}}

\def\virt{^{\mathrm{vir}} }

\def\Spec{\mathrm{Spec} }

\def\bbL{\mathbb{L} }
\def\DM{Deligne-Mumford }

\def\lalp{_\alpha }
\def\albe{{\alpha\beta} }
\def\fX{\mathfrak{X} }
\def\bfc{\mathbf{c} }

\title[Localizing virtual cycles]{Localizing virtual fundamental cycles for semi-perfect obstruction theories}

\author{Young-Hoon Kiem}
\address{Department of Mathematics and Research Institute
of Mathematics, Seoul National University, Seoul 08826, Korea}
\email{kiem@snu.ac.kr}

\thanks{Partially supported by Samsung Science and Technology Foundation.}

\date{October 31, 2016}

\begin{document}
\begin{abstract} Recently H.-L. Chang and J. Li generalized the theory of virtual fundamental class to the setting of semi-perfect obstruction theory. A semi-perfect obstruction theory requires only the \emph{local} existence of a perfect obstruction theory with compatibility conditions. 
In this paper, we generalize the torus localization of Graber-Pandharipande \cite{GrPa}, the cosection localization \cite{KLc} and their combination \cite{CKL}, to the setting of semi-perfect obstruction theory. As an application, we show that the Jiang-Thomas theory \cite{JiTh} of virtual signed Euler characteristic works without the technical quasi-smoothness assumption from derived algebraic geometry.\end{abstract}
\maketitle  

\section{Introduction}\label{sec1}

The theory of virtual fundamental class was invented 
in 1995 by Li-Tian \cite{LiTi} and Behrend-Fantechi \cite{BeFa}
in order to provide a rigorous algebro-geometric theory of the Gromov-Witten invariant.
Since then, the virtual fundamental class has played a key role
in defining important invariants like Donaldson-Thomas and Pandharipande-Thomas invariants, 
as integrals on the virtual fundamental cycles on suitable moduli spaces. Each \DM stack $X$ 
has the intrinsic normal cone $\bfc_X$ canonically embedded into its abelian hull $\fN_X=h^1/h^0(\bbL_X^\vee)$ where $\bbL_X$ denotes the cotangent complex of $X$ (cf. \cite{BeFa}). A perfect obstruction theory $\phi:E\to \bbL_X$ gives us an embedding of
$\fN_X$ into the vector bundle stack $\cE_X=h^1/h^0(E^\vee)$ and the virtual fundamental class $[X]\virt$ is defined by applying the Gysin map $0^!_{\cE_X}$ to $[\bfc_X]$.
   
 A few effective techniques to handle virtual fundamental classes were discovered 
during the past two decades, such as
the torus localization of Graber-Pandharipande \cite{GrPa}, the degeneration method of J. Li \cite{LiD} and the cosection localization \cite{KLc}. Often combining these techniques turns out to be quite effective.
In \cite{CKL}, it was proved that the torus localization works for the cosection localized virtual fundamental classes and this combined localization turned out to be quite useful for the Landau-Ginzburg/Calabi-Yau correspondence \cite{CLLL}. 

Recently thanks to improved knowledge on derived categories and wall crossing, there arose a demand to handle more moduli spaces of derived category objects on a smooth projective variety. Unfortunately it is often hard to find perfect obstruction theories on moduli spaces of derived category objects and hence virtual fundamental classes were not readily available for them. 
In \cite{CLs}, H.-L. Chang and J. Li constructed the virtual fundamental class under a weaker condition than the existence of a perfect obstruction theory. They require only the \emph{local} existence of compatible perfect obstruction theories and it was proved in \cite{CLs} that these local theories are sufficient to give the virtual fundamental class with desired properties. In fact, this weaker requirement amounts to the theory of tangent-obstruction sheaves of \cite{LiTi}.
For many moduli spaces of derived category objects, semi-perfect obstruction theories are easier to construct (cf. \cite{CLs}) and it was shown in \cite{KLp} that a critical virtual manifold always has a semi-perfect obstruction theory and hence a virtual fundamental class.  
The goal of this paper is to generalize and establish the localization theorems in the setting of semi-perfect obstruction theory. 

\medskip

A semi-perfect obstruction theory on a \DM stack $X$ consists of an \'etale cover $\{X\lalp\to X\}$ and perfect obstruction theories $$\{\phi\lalp:E\lalp\to \bbL_{X\lalp}\}$$ of $X\lalp$ satisfying compatibility conditions on the obstruction sheaves and assignments (cf. Definition \ref{2.10}). By constructing a cone cycle $\cC_X$ in the obstruction sheaf $Ob_X$ and applying the (generalized) Gysin map $0^!_{\cF}$ for the coherent sheaf $\cF=Ob_X$, we obtain the virtual fundamental class $[X]\virt$ with respect to the semi-perfect obstruction theory on $X$. 
When there is an action of $T=\CC^*$ and all the local perfect obstruction theories as well as the \'etale cover $X\lalp\to X$ are $T$-equivariant, we prove that the usual torus localization formula
\[ [X]\virt=\imath_* \frac{[F]\virt}{e(N\virt)}\]
holds (cf. Theorem \ref{4.10}) where $F$ is the $T$-fixed locus equipped with the induced semi-perfect obstruction theory. The only additional assumption we need is the existence of a global resolution $[N_0\to N_1]$ of the virtual normal bundle $N\virt$ of $F$ by locally free sheaves as in \cite{CKL}.

Compatibility of perfect obstruction theories $\phi\lalp:E\lalp\to \bbL_{X\lalp}$ requires that the local obstruction sheaves $Ob_{X\lalp}=h^1(E\lalp^\vee)$ glue to a sheaf $Ob_X$ on $X$. If there is a cosection $\sigma:Ob_X\to \sO_X$, we prove (cf. Theorem \ref{3.0}) that the virtual fundamental class $[X]\virt$ of $X$ localizes to the zero locus $X(\sigma)$ of $\sigma$, generalizing the cosection localization theorem in \cite{KLc}.

In \cite{CKL}, the authors combined the torus localization with the cosection localization. They proved that the torus localization formula holds for the cosection localized virtual fundamental class if the cosection $\sigma$ is $T$-invariant. In \S\ref{sec5}, we generalize this combined localization result to the setting of semi-perfect obstruction theory (cf. Theorem \ref{5.6}).

As an application, we consider the Jiang-Thomas theory of virtual signed Euler characteristic in \cite{JiTh}.
For a scheme $X$ equipped with a perfect obstruction theory, 
Jiang and Thomas collected five natural ways to think of virtual signed Euler characteristic via
\begin{enumerate}
\item K-theory,
\item torus localization,
\item Euler characteristic weighted by the Behrend function,
\item cosection localization and
\item ordinary Euler characteristic,
\end{enumerate}
and proved that (1)=(2) while (3)=(4)=(5).
To make sense of (2), (3) and (4), they constructed the dual obstruction cone $N=\Spec_X(\mathrm{Sym}(Ob_X))$ and 
proved that $N$ admits a perfect obstruction theory after assuming that $X$ comes from a quasi-smooth derived scheme. 
We will prove in \S\ref{sec6} that $N$ always admits a semi-perfect obstruction theory without any assumption from derived algebraic geometry.
Applying the torus localization and cosection localization theorems proved in this paper, we can make sense of (2), (3) and (4) and the Jiang-Thomas theory of virtual signed Euler characteristic is established without the technical assumption.

\medskip

Here is the layout of the paper.
In \S\ref{sec3}, we will prove that the cosection localization of \cite{KLc} works for semi-perfect obstruction theory. 
In \S\ref{sec4}, we will show that the torus localization of Graber-Pandharipande \cite{GrPa} also works for semi-perfect obstruction theory. In \S\ref{sec5}, we prove that the torus localization works for the cosection localized virtual fundamental class of a semi-perfect obstruction theory. In \S\ref{sec6}, we apply these results to the dual obstruction cone of Jiang-Thomas and show that their theory works without the technical quasi-smoothness assumption from derived algebraic geometry.

\medskip
All schemes or \DM stacks in this paper are of finite type and defined over the complex number field $\CC$. All Chow groups in this paper have coefficients in the rational number field $\QQ$.

\bigskip

\section{Localization of virtual fundamental cycles and semi-perfect obstruction theory}\label{sec2}

\subsection{Localization of virtual fundamental classes}\label{S2.1}
In this subsection, we recall the torus localization of Graber-Pandharipande \cite{GrPa} and the cosection localization for virtual fundamental class (cf. \cite{KLc}). 

Let $X$ be a \DM stack over $\CC$. The intrinsic normal cone $\bfc_X$ of $X$ is a cone stack 
which satisfies $[C_{U/V}/T_V|_U]=\bfc_X|_U$ if $U\to X$ is \'etale and $U$ is closed in a smooth variety $V$. 
\begin{defi}\cite{BeFa}
A \emph{perfect obstruction theory} on a \DM stack $X$ is a morphism $\phi:E\to \bbL_X$ in the derived category $D(\sO_X)$ of quasi-coherent $\sO_X$-modules such that
\begin{enumerate}
\item $E$ is locally isomorphic to a two-term complex $[E^{-1}\to E^0]$ of locally free sheaves,
\item $h^{-1}(\phi)$ is surjective and $h^0(\phi)$ is an isomorphism.
\end{enumerate}
The perfect obstruction theory is \emph{symmetric} if there exists an isomorphism $\theta:E\to E^\vee[1]$ with $\theta^\vee[1]=\theta$. 
\end{defi}
Here, $\bbL_X$ denotes the truncated cotangent complex $\tau^{\ge -1}L_X$ of $X$.

A perfect obstruction theory $\phi:E\to \bbL_X$ gives us a vector bundle stack $\cE=h^1/h^0(E^\vee)$ which is locally $E_1/E_0$ where $E_i$ is the dual bundle of $E^{-i}$ for $i=0,1$. Then by \cite{BeFa}, the intrinsic normal cone $\bfc_X$
is canonically embedded into the abelian cone stack $h^1/h^0(\bbL_X^\vee)$
which is embedded into $\cE$ by $\phi^\vee$. Then the virtual fundamental class of $X$ is defined as 
$$[X]\virt=0^!_\cE[\bfc_X]$$
where $0^!_\cE$ is the Gysin map for the vector bundle stack $\cE$. 

When $X$ admits an action of $T=\CC^*$ and the perfect obstruction theory $\phi:E\to \bbL_X$ is 
$T$-equivariant, the virtual fundamental class of $X$ is localized to the $T$-fixed locus $F$ in $X$ by the formula
\beq\label{2.1}
[X]\virt=\imath_*\frac{[F]\virt}{e(N\virt)}\in A^T_*(X)\otimes_{\QQ[t]}\QQ[t,t^{-1}]
\eeq
where $\imath:F\to X$ is the inclusion and $N\virt$ is the moving part of $E^\vee|_F$ if $N\virt$ admits
a global resolution $[N_0\to N_1]$ by locally free sheaves $N_0, N_1$ on $F$ (cf. \cite{GrPa, CKL}).  


When the obstruction sheaf $Ob_X=h^1(E^\vee)$ has a cosection $\sigma:Ob_X\to\sO_X$, the virtual fundamental class $[X]\virt$ is localized to the zero locus $X(\sigma)$ of $\sigma$ by \cite{KLc}.
The cosection localized virtual fundamental class is obtained in two steps. Firstly we observe that the intrinsic normal cone $\bfc_X$ has support in $$\cE(\sigma)=\cE|_{X(\sigma)}\cup \ker(\cE|_U\mapright{\sigma} \sO_U)$$ where $U=X-X(\sigma)$. Secondly the Gysin map $0^!_{\cE}:A_*\cE\to A_*X$ is localized to a map
$$0^!_{\cE,\mathrm{loc}}:A_*(\cE(\sigma))\lra A_*(X(\sigma))$$
and the cosection localized virtual cycle is defined as
$$[X]\virt_\loc=0^!_{\cE,\mathrm{loc}}[\bfc_X].$$ 
This cosection localized virtual fundamental class satisfies many expected properties such as deformation invariance and
$$\imath_*[X]\virt_\loc=[X]\virt$$
where $\imath:X(\sigma)\to X$ is the inclusion map.

Combining the localization techniques is often useful. In \cite{CKL}, the authors proved that the torus localization formula \eqref{2.1} holds for the cosection localized virtual cycles, i.e.
\beq\label{2.2}
[X]\virt_\loc=\imath_*\frac{[F]\virt_\loc}{e(N\virt)}\in A^T_*(X(\sigma))\otimes_{\QQ[t]}\QQ[t,t^{-1}]
\eeq
which turned out to be quite useful. 

\subsection{Cycles on sheaf stacks}\label{S2.2}

In this section, we recall the notion of cycles on sheaf stacks \cite{CLs}. Let $X$ be a \DM stack and $\cF$ be a coherent sheaf on $X$. Then the groupoid associating sections $s\in \Gamma(U,f^*\cF)$ to a morphism $f:U\to X$ is a stack, called the \emph{sheaf stack} of $\cF$. 
By abusing the notation, we denote the sheaf stack by the same symbol $\cF$. 

We will consider the intersection theory on the sheaf stack $\cF$. 
\begin{defi} \cite[Definition 3.2]{CLs}
(1) A \emph{reduced cycle} in $\cF$ is a substack $B$ of $\cF$ such that for any \'etale morphism $f:U\to X$ from a scheme $U$ and a surjective homomorphism $\cV\to f^*\cF$ from a vector bundle $\cV$ on $U$, $\tilde{B}=\cV\times_{f^*\cF}f^*B\subset \cV$ is a Zariski closed reduced subscheme. 

(2) Three  reduced cycles $B_1, B_2, B_3$ in $\cF$ satisfy $B_3=B_1\cup B_2$ if 
for any \'etale morphism $f:U\to X$ from a scheme $U$ and a surjective homomorphism $\cV\to f^*\cF$ from a vector bundle $\cV$ on $U$, $\tilde{B}_3=\tilde{B}_1\cup\tilde{B}_2$  where $\tilde{B}_i=\cV\times_{f^*\cF}f^*B_i$. 

(3) A reduced cycle $B$ in $\cF$ is \emph{irreducible} if it is not the union of two nontrivial reduced cycles $B_1$ and $B_2$ with $B\ne B_1$ and $B\ne B_2$. 

(4) A \emph{prime cycle} in $\cF$ is an irreducible reduced cycle in $\cF$.

(5) The $\QQ$-vector space spanned by prime cycles in $\cF$ is denoted by $Z_*(\cF)$. Elements of $Z_*(\cF)$ are called \emph{cycles} on $\cF$. 
\end{defi}

\begin{exam} \cite{Behr}
Let $X$ be a \DM stack equipped with a perfect obstruction theory $E\to \bbL_X$. 
Let $$\bfc_X\subset \cE_X=h^1/h^0(E^\vee)$$ be the intrinsic normal cone of $X$. Then the obstruction sheaf $Ob_X=h^1(E^\vee)$ is the coarse moduli sheaf of $\cE_X$ and let 
$$\cC_X\hookrightarrow Ob_X$$
denote the coarse moduli sheaf of $\bfc_X$ so that we have a cartesian diagram
$$\xymatrix{
\bfc_X\ar@{^(->}[r] \ar[d] &\cE_X\ar[d]\\
\cC_X\ar@{^(->}[r] &Ob_X.
}$$ 
Then for any \'etale open $f:U\to X$ and a surjective homomorphism $\cV\to f^*Ob_X$ from a vector bundle $\cV$ on $U$, the fiber product $$\cV\times_{f^*Ob_X}f^*\cC_X\subset \cV$$
gives us a cycle (called the \emph{obstruction cone}) on the vector bundle $\cV$ (cf. \cite[\S2]{Behr}). Therefore $\cC_X\in Z_*(Ob_X)$. 

It was proved in the proof of \cite[Proposition 2.2]{Behr} that if $V\to Ob_X$ is a surjective homomorphism from a vector bundle $V$ on $X$, then the virtual fundamental class is given by
$$[X]\virt = 0^!_{V}[V\times_{Ob_X}\cC_X].$$
In the notation of \cite{CLs}, the right side is $0^!_{Ob_X}(\cC_X)$ so that we have
$$[X]\virt=0^!_{Ob_X}(\cC_X).$$
\end{exam}

Let $X$ be a \DM stack and $\cF$ be a coherent sheaf on $X$. Let $B\in Z_*(\cF)$ be a prime cycle. Let $\rho:U\to X$ be an \'etale morphism with $\rho^*B=B\times_XU\ne 0\in Z_*(\rho^*\cF)$.  Pick a vector bundle $F$ on $U$ that admits a surjective homomorphism $F\to \rho^*\cF$. Let $\bar B\subset U$ be the image of the cycle $\rho^*B\times_{\rho^*\cF}F$ by the projection $F\to U$. Then the closure $Y$ of $\rho(\bar B)$ in $X$ is a closed \DM substack. 
By Chow's lemma \cite[16.6.1]{LaMo}, 
there are a quasi-projective scheme $S$, a generically \'etale  proper surjective morphism $f:S\to Y$, a locally free sheaf $\cV$ and a surjective homomorphism $\cV\to f^*\cF$. For an open $S'\subset S$ such that $f|_{S'}:S'\to Y$ is \'etale, let $\tilde{B}$ be the closure in $\cV$ of $\cV|_{S'}\times_{\cF}B\subset \cV|_{S'}$. 

\begin{defi}\label{2.50}
The triple $(f:S\to Y, \cV\to f^*\cF, \tilde{B}\subset \cV)$ is called a \emph{proper representative} of $B$. 
\end{defi}

Let $W\in Z_*(\cF)$ be a prime cycle on a coherent sheaf $\cF$ on a \DM stack $X$. 
A rational function $h$ on $W$ is an equivalence class of rational functions $h_f\in \mathbf{k}(\tilde{W})$ on $\tilde{W}$ for proper representatives $(f, \cV, \tilde{W})$ of $W$ where the equivalence relation is defined as follows: Let $(f',\cV',\tilde{W}')$ be another proper representative of $W$ and $h_{f'}\in \mathbf{k}(\tilde{W}')$. Then  $h_f\sim h_{f'}$ if and only if there is a third proper representative $(\hat{f},\hat{\cV}, \hat{W})$ of $W$ that fits into the commutative diagrams
\[
\xymatrix{\hat{S}\ar[r]^{g'} \ar[d]_g\ar[dr]^{\hat{f}}  &S'\ar[d]^{f'}\\
S\ar[r]^f & Y
}\and \xymatrix{
\hat{\cV}\ar@{->>}[r]\ar@{->>}[d] &  {g'}^*\cV'\ar@{->>}[d]\\
g^*\cV\ar@{->>}[r] & g^*f^*\cF=\hat{\cF}={g'}^*{f'}^*\cF ,
}\]
such that the pullbacks of $h_f$ and $h_{f'}$ to $\hat{W}$ by $g$ and $g'$ respectively coincide in $\mathbf{k}(\hat{W})$. 

We say a rational function $h$ on a prime cycle $W$ on $\cF$ is \emph{admissible} if for any proper representative $(f,\cV,\tilde{W})$, the extension $h_f^{\mathrm{nor}}:\tilde{W}^{\mathrm{nor}}\to \PP^1$ of $h_f$ to the normalization $\tilde{W}^{\mathrm{nor}}$  is constant on the fibers of $\tilde{W}^{\mathrm{nor}}\to \cF$. 

For a prime cycle $W$ on $\cF$ and an admissible rational function $h$ on $W$, we write the principal divisor of $h_f$ on $\tilde{W}$ as 
$$\partial (\tilde{W},h_f)=\sum_i n_iD_i$$
for distinct prime divisors $D_i$ on $\tilde{W}$ where $(f,\cV, \tilde{W})$ is a proper representative of $W$ and $h_f$ is a rational function on $\tilde{W}$ representing $h$. Then the image $W_i$ of $D_i$ in $\cF$ is a prime cycle on $\cF$ (cf. \cite[\S3]{CLs}). The boundary of $(W,h)$ is now defined as
$$\partial (W,h)=\sum_i n_i {W}_i \frac{e_i}{\deg f}$$
where $e_i$ is the degree of the morphism $\bar{W}_i\to W_i$ where $\bar{W}_i$ is the image of $D_i$ in $f^*\cF$.

\begin{defi}\label{2.53} \cite{CLs}
Two cycles $B_1$ and $B_2$ on a coherent sheaf $\cF$ on a \DM stack $X$ are \emph{rationally equivalent} if $B_1-B_2$ is a linear combination of  cycles of the form $\partial (W,h)$ where $W$ is a prime cycle on $\cF$ and $h$ is an admissible rational function on $W$. We let 
$A_*(\cF)$ be the $\QQ$-vector space of rational equivalence classes of cycles in $Z_*(\cF)$. 
\end{defi}


With this preparation, the Gysin map for a coherent sheaf $\cF$ is now defined as follows. 
\begin{defi}\label{2.54}
The Gysin map for $\cF$ is defined by
$$0_\cF^![B]=\frac{1}{\deg f} f_*(0_\cV^![\tilde B])\in A_*X$$
where $(f,\cV,\tilde{B})$ is a proper representative of $B$. 
\end{defi}
By \cite[Proposition 3.4]{CLs}, $0^!_{\cF}[B]$ is independent of the choice of a proper representative of $B$.
By \cite[Corollary 3.6]{CLs}, the Gysin map $0^!_\cF$ preserves rational equivalence and hence induces a map $0_\cF^!:A_*(\cF)\lra A_*(X).$

\medskip

Suppose there is a homomorphism $\sigma:\cF\to \sO_X$, called a \emph{cosection}, of a coherent sheaf $\cF$ on a \DM stack $X$. Let $X(\sigma)$ be the zero locus of $\sigma$ whose defining ideal is the image of $\sigma$. Let $U=X-X(\sigma)$ and consider the substack
$$\cF(\sigma)=\cF|_{X(\sigma)}\cup \ker(\sigma:\cF|_U\to \sO_U).$$
A cycle on $\cF(\sigma)$ is a cycle on $\cF$ which is a substack of $\cF(\sigma)$, i.e. a prime cycle $B\in Z_*(\cF)$ belongs to $Z_*(\cF(\sigma))$ if for a proper representative $(f:S\to X,\cV\to f^*\cF,\tilde B)$ of $B$, we have 
$$\tilde B\subset \cV|_{S\times_XX(\sigma)}\cup \ker(\tilde \sigma:\cV\to \sO_S)$$
where $\tilde\sigma:\cV\to \sO_S$ is the composition of the surjective homomorphism $\cV\to f^*\cF$ with $f^*\sigma:f^*\cF\to\sO_S$. We say two cycles $B_1$ and $B_2$ on $\cF(\sigma)$ are rationally equivalent if $B_1-B_2$ is a linear combination of cycles of the form $\partial(W,h)$  where $W$ is a prime cycle on $\cF(\sigma)$ and $h$ is an admissible rational function on $W$. We let $A_*(\cF(\sigma))$ be the $\QQ$-vector space of rational equivalence classes of cycles on $\cF(\sigma)$.

\subsection{Semi-perfect obstruction theory}\label{S2.3}
A semi-perfect obstruction theory is a generalization of a perfect obstruction theory which still gives rise to a virtual fundamental class. 

\begin{defi}\label{2.10} \cite{CLs}
A \emph{semi-perfect obstruction theory} on a \DM stack $X$ consists of an \'etale cover $\{X\lalp\to X\}$ and a perfect obstruction theory $\phi\lalp:E\lalp\to \bbL_{X\lalp}$ for each $\alpha$ such that 
\begin{enumerate}
\item there are isomorphisms $\psi_\albe:h^1(E^\vee\lalp)|_{X_\albe}\to h^1(E^\vee_\beta)|_{X_\albe}$ for pairs of indices $\alpha,\beta$, that glue $\{h^1(E^\vee\lalp)\}$ to a sheaf $Ob_X$ on $X$ and that 
\item the perfect obstruction theories $E\lalp|_{X_\albe}$ and $E_\beta|_{X_\albe}$ give the same obstruction assignment via $\psi_\albe$ for pairs of indices $\alpha,\beta$.
\end{enumerate}
\end{defi}
Here $X_\albe=X\lalp\times_XX_\beta$ as usual.

The second condition in Definition \ref{2.10} means the following. 
\begin{defi}
Let $x\in U$ be a closed point of a \DM stack. An \emph{infinitesimal lifting problem} on $U$ consists of an extension $0\to I\to B\to \bar B\to 0$ of Artin local rings with $I\cdot m_B=0$ and a morphism $\bar g:\Spec \bar B\to U$ that sends the closed point of $\Spec \bar B$ to $x$.

If $\phi:E\to \bbL_U$ is a perfect obstruction theory on $U$, letting $\bar \Delta=\Spec \bar B$ and $\Delta=\Spec B$,  the natural morphism $\bbL_{\bar \Delta}\to \bbL_{\bar \Delta/\Delta}=I[1]$ composed with $\phi$ gives us 
\[ \bar g^*E\mapright{\bar g^*\phi}\bar g^*\bbL_X\mapright{\bar g} \bbL_{\bar \Delta}\lra \bbL_{\bar \Delta/\Delta}=I[1] \]
which is the \emph{obstruction class}
\beq
ob_U(\phi,\bar g,B,\bar B)=(\bar g^*E\to I[1])\in Ext^1(\bar g^*E,I)=I\otimes_\CC h^1(E^\vee)|_x .
\eeq

Two obstruction theories $\phi:E\to \bbL_U$ and $\phi':E'\to \bbL_U$ give the \emph{same obstruction assignment} via an isomorphism $\psi:h^1(E^\vee)\mapright{\cong} h^1({E'}^\vee)$ of obstruction sheaves if 
$$ob_ U(\phi',\bar g, B,\bar B)=\psi \left( ob_U(\phi,\bar g, B, \bar B)\right) \in I\otimes_\CC h^1({E'}^\vee)|_x$$
for any infinitesimal lifting problem $(\bar g, B,\bar B).$
\end{defi}

It was proved in \cite{CLs} that the conditions in Definition \ref{2.10}  guarantee that the images of 
$$\bfc_{X\lalp}\hookrightarrow \cE\lalp\lra h^1(E^\vee\lalp)=Ob_{X\lalp}$$
glue to a cone $\cC_X\subset Ob_X$ by the following. 
\begin{prop} \cite{CLs}
Let $\fN_X=h^1/h^0(\bbL_X^\vee)$ denote the abelian hull of the intrinsic normal cone $\bfc_X$ of $X$. Then the morphisms $$\eta\lalp: \fN_X|_{X\lalp}= h^1/h^0(\bbL_{X\lalp}^\vee)\lra h^1/h^0(E\lalp^\vee)\lra h^1(E\lalp^\vee)=Ob_X|_{X\lalp}$$
glue to a morphism $\eta:\fN_X\to Ob_X$. Moreover, if $A\in Z_*(\fN_X)$, then $\{(\eta\lalp)_*(A|_{X\lalp})\}$ glue to a cycle $\eta_*[A]\in Z_*(Ob_X)$. 
\end{prop}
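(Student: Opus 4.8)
The plan is to reduce the first assertion to an equality of homomorphisms of coherent sheaves, to extract that equality from the obstruction-assignment compatibility in Definition \ref{2.10}(2), and then to deduce the statement about cycles formally. For the reduction I would pass to coarse moduli sheaves: exactly as $Ob_X=h^1(E\lalp\dual)$ is the coarse moduli sheaf of $\cE\lalp=h^1/h^0(E\lalp\dual)$, the coarse moduli sheaf of $\fN_X=h^1/h^0(\bbL_X\dual)$ is $h^1(\bbL_X\dual)$, which is intrinsic to $X$, hence a globally defined coherent sheaf on $X$; write $\mathfrak n_X$ for it, so that $\mathfrak n_X|_{X\lalp}=h^1(\bbL_{X\lalp}\dual)$. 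A morphism of stacks from $\fN_X|_{X\lalp}$ to the sheaf stack $Ob_X|_{X\lalp}$, which has trivial automorphism groups, factors uniquely through $\mathfrak n_X|_{X\lalp}$, and unwinding the definition of $\eta\lalp$ shows that the induced homomorphism of coherent sheaves is $h^1(\phi\lalp\dual):\mathfrak n_X|_{X\lalp}\to Ob_X|_{X\lalp}$. Consequently $\eta\lalp$ and $\eta_\beta$ agree over $X_\albe$ under the gluing identification $\psi_\albe$ of obstruction sheaves if and only if
$$\psi_\albe\circ h^1(\phi\lalp\dual)=h^1(\phi_\beta\dual)\quad\text{as homomorphisms }\mathfrak n_X|_{X_\albe}\to Ob_X|_{X_\albe},$$
so it suffices to prove this for all pairs $\alpha,\beta$; the $\eta\lalp$ will then glue to a global $\eta:\fN_X\to Ob_X$ by \'etale descent.

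Next I would establish that equality from Definition \ref{2.10}(2). As recalled in \S\ref{S2.3}, the obstruction class $ob_U(\phi,\bar g,B,\bar B)$ of a perfect obstruction theory $\phi:E\to\bbL_U$ is, by construction, the image under $\id_I\otimes h^1(\phi\dual)|_x$ of the \emph{intrinsic} obstruction class
$$\omega(\bar g,B,\bar B)=\bigl(\bar g^*\bbL_X\to\bbL_{\bar\Delta}\to\bbL_{\bar\Delta/\Delta}=I[1]\bigr)\in I\otimes_\CC h^1(\bbL_X\dual)|_x=I\otimes_\CC\mathfrak n_X|_x ,$$
which depends only on $X$ and on the lifting problem, not on $\phi$ or $E$. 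Hence Definition \ref{2.10}(2), applied over $X_\albe$, says precisely that $\psi_\albe\bigl(ob_{X_\albe}(\phi\lalp,\bar g,B,\bar B)\bigr)=ob_{X_\albe}(\phi_\beta,\bar g,B,\bar B)$ for every infinitesimal lifting problem on $X_\albe$, i.e.\ that $\psi_\albe\circ h^1(\phi\lalp\dual)$ and $h^1(\phi_\beta\dual)$ have the same effect on $\omega(\bar g,B,\bar B)$ for all such problems and all points $x\in X_\albe$. Since the obstruction assignment of a perfect obstruction theory determines the homomorphism $h^1(\phi\dual)$ on obstruction sheaves --- equivalently, the intrinsic obstruction classes of all infinitesimal lifting problems jointly detect $\sO$-linear maps out of $\mathfrak n_X$ (cf.\ \cite{BeFa,CLs}) --- this forces $\psi_\albe\circ h^1(\phi\lalp\dual)=h^1(\phi_\beta\dual)$, which proves the first assertion.

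For the second assertion, \'etale pullback of cycles on a sheaf stack is well defined (\cite{CLs}), so $A|_{X\lalp}\in Z_*(\fN_X|_{X\lalp})$ makes sense; and for a prime cycle $B$ one obtains $(\eta\lalp)_*[B]\in Z_*(Ob_X|_{X\lalp})$ by choosing a proper representative $(f,\cV,\tilde B)$ of $B$ (Definition \ref{2.50}), taking the image of $\tilde B$ under the composite $\cV\to f^*\fN_X\xrightarrow{\eta\lalp}f^*Ob_X$, and weighting it by the generic degree of $B$ onto its image, in the spirit of \cite[\S3]{CLs}. By the first assertion the morphisms $\eta\lalp$ are the restrictions of a single morphism $\eta:\fN_X\to Ob_X$, and both the \'etale pullback and this pushforward are read off from the same vector-bundle presentations; hence $(\eta\lalp)_*(A|_{X\lalp})$ and $(\eta_\beta)_*(A|_{X_\beta})$ have the same restriction to $X_\albe$ and glue, by \'etale descent for cycles on sheaf stacks (\cite{CLs}), to the desired cycle $\eta_*[A]\in Z_*(Ob_X)$.

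The step I expect to be the main obstacle is the one in the second paragraph: upgrading Definition \ref{2.10}(2) --- a family of pointwise identities between obstruction classes of infinitesimal lifting problems --- to an equality of coherent-sheaf homomorphisms. This rests on the Behrend-Fantechi description of a perfect obstruction theory through its obstruction assignment, together with the (routine but necessary) identification of $\eta\lalp$ on coarse moduli with $h^1(\phi\lalp\dual)$, which follows from the local descriptions of the Picard stacks $h^1/h^0(E\lalp\dual)$ and $h^1/h^0(\bbL_{X\lalp}\dual)$ and their coarse moduli sheaves. Once these are in place, both gluing statements reduce to descent bookkeeping.
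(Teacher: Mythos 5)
The paper gives no proof of this Proposition — it is quoted verbatim from \cite{CLs} — so your argument has to stand on its own, and measured that way it has a genuine gap at exactly the point you yourself flag. The reduction of the first assertion to the identity $\psi_\albe\circ h^1(\phi\lalp^\vee)=h^1(\phi_\beta^\vee)$ over $X_\albe$ is sensible (modulo a caveat: the ``coarse moduli sheaf'' of the cone stack $\fN_X$ has to be handled through the sheaf-stack formalism, since sections of the abelian cone attached to $I/I^2$ are not sections of the pullback of $h^1(\bbL_X^\vee)$ when these sheaves are not locally free). But the step carrying all the weight --- ``the intrinsic obstruction classes of all infinitesimal lifting problems jointly detect $\sO$-linear maps out of $\mathfrak{n}_X$'' --- is asserted with a ``cf.''\ and never argued, and it is not a formal fact. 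Definition \ref{2.10}(2) only yields identities in $I\otimes_\CC h^1(E_\beta^\vee)|_x$ for closed points $x$ and Artinian test data; the intrinsic class lives in $\Ext^1(\bar g^*\bbL_X,I)\cong I\otimes\Ext^1(\bbL_X|_x,\CC)$, an Ext group of the \emph{fiber}, which is not the fiber of the coherent sheaf $h^1(\bbL_X^\vee)$ (base change fails since $\bbL_X$ is not perfect), so these identities do not literally evaluate your two sheaf maps on fibers. And even if they did, a homomorphism of coherent sheaves is not determined by its restrictions to closed fibers (multiplication by a nilpotent function vanishes on every closed fiber), so the Artinian thickenings must be exploited in an essential, quantitative way: one has to produce, for each point, enough lifting problems whose intrinsic classes span, and then pass from those pointwise constraints to an equality of sheaf homomorphisms. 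That passage is precisely the mathematical content of the Proposition, and your proposal leaves it as a black box.

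For comparison, the proof this paper relies on (in \cite{CLs}, in the spirit of Li--Tian's tangent--obstruction formalism) does not invoke such an abstract detection principle: it works with the standard local models, where each $X\lalp$ is cut out by a section of a vector bundle on a smooth space and the obstruction classes are written explicitly, and it is in that concrete setting that the obstruction-assignment compatibility is shown to force the local data --- in particular the coarse cone cycles $\cC_{X\lalp}\subset Ob_{X\lalp}$, which is what the virtual class construction actually uses --- to agree on overlaps. Your treatment of the ``Moreover'' part as descent bookkeeping is broadly right once $\eta$ is in hand (the pushforward of cycles to a sheaf stack with the degree weighting is set up in \cite{CLs}), but as written the central gluing statement is assumed rather than proved.
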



Applying the Gysin map
$$0^!_{\cF}:A_*(\cF)\lra A_*(X),\quad \cF=Ob_X$$ 
to the image $\cC_X\in Z_*(Ob_X)$ of the intrinsic normal cone $\bfc_X\in Z_*(\fN_X)$, we get the virtual fundamental class $[X]\virt$ of $X$ for the semi-perfect obstruction theory.

We will show below that the torus localization of Graber-Pandharipande \cite{GrPa} (cf. \S\ref{sec4}) and the cosection localization \cite{KLc} (cf. \S\ref{sec3}) as well as their combination (cf. \S\ref{sec5}) work for semi-perfect obstruction theories.

\bigskip

\section{Cosection localization for semi-perfect obstruction theory}\label{sec3}

In this section, we generalize the cosection localization principle in \cite{KLc} to the setting of semi-perfect obstruction theory in \cite{CLs}. 

Let $X$ be a \DM stack over $\CC$ equipped with a semi-perfect obstruction theory
\beq\label{3.1}
(X\lalp\to X,\quad \phi\lalp:E\lalp\to \bbL_{X\lalp} ).
\eeq 
Let $Ob_X$ denote the obstruction sheaf, the gluing of $h^1(E\lalp^\vee)$. Let
\beq\label{3.2}
\sigma:Ob_X\lra \sO_X  
\eeq
be a homomorphism, called a \emph{cosection} of the obstruction sheaf. 
Let $X(\sigma)$ be the closed substack (\emph{zero locus} of $\sigma$) of $X$ defined by the ideal $\sigma(Ob_X)\subset \sO_X$
and let $U=X-X(\sigma)$ be the open substack over which $\sigma$ is surjective. 

The goal of this section is to prove the following generalization of \cite[Theorem 1.1]{KLc}.
\begin{theo}\label{3.0} (Cosection localization for semi-perfect obstruction theory) \\
Let $X$ be a \DM stack equipped with a semi-perfect obstruction theory. Suppose the obstruction sheaf $Ob_X$ admits a cosection $\sigma:Ob_X\to \sO_X$. Let $X(\sigma)$ be the zero locus of $\sigma$ and 
$\imath:X(\sigma)\to X$ denote the inclusion. 
Then there exists a localized virtual fundamental class
\[ [X]\virt_\loc\in A_*(X(\sigma)) \]
satisfying $\imath_*[X]\virt_\loc=[X]\virt\in A_*(X)$.
\end{theo}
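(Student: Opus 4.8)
The plan is to reproduce, in the semi-perfect setting, the two-step construction of the cosection-localized virtual cycle from \cite{KLc}. \textbf{Step 1} shows that the cone cycle $\cC_X\in Z_*(Ob_X)$ lies in the substack $Ob_X(\sigma)$, i.e.\ $\cC_X\in Z_*(Ob_X(\sigma))$. \textbf{Step 2} constructs a cosection-localized Gysin homomorphism
\[ 0^!_{Ob_X,\loc}:A_*(Ob_X(\sigma))\lra A_*(X(\sigma)) \]
lifting the Gysin map $0^!_{Ob_X}$ of Definition \ref{2.54}, in the sense that $\imath_*\circ 0^!_{Ob_X,\loc}=0^!_{Ob_X}$. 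Granting these, one sets $[X]\virt_\loc=0^!_{Ob_X,\loc}[\cC_X]\in A_*(X(\sigma))$ and obtains $\imath_*[X]\virt_\loc=0^!_{Ob_X}[\cC_X]=[X]\virt$, which gives both claims of the theorem.

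\textbf{Step 1.} Fix the data $(X\lalp\to X,\ \phi\lalp:E\lalp\to\bbL_{X\lalp})$ of the semi-perfect obstruction theory. Via the gluing isomorphisms $\psi_\albe$, the cosection $\sigma$ restricts to a cosection $\sigma\lalp:Ob_{X\lalp}=h^1(E\lalp^\vee)\to\sO_{X\lalp}$ of each local perfect obstruction theory, with zero locus $X\lalp(\sigma\lalp)$ and $U\lalp=X\lalp-X\lalp(\sigma\lalp)$. By the key localization lemma of \cite{KLc} (to the effect that $\bfc_{X\lalp}$ is supported in $\cE\lalp(\sigma\lalp)=\cE\lalp|_{X\lalp(\sigma\lalp)}\cup\ker(\bar\sigma\lalp:\cE\lalp|_{U\lalp}\to\sO_{U\lalp})$, where $\bar\sigma\lalp$ is the composite $\cE\lalp|_{U\lalp}\to Ob_{X\lalp}|_{U\lalp}\to\sO_{U\lalp}$), passing to coarse moduli sheaves gives $\cC_{X\lalp}\subset Ob_{X\lalp}(\sigma\lalp)$. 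Since the $\psi_\albe$ preserve obstruction assignments and $\sigma$ is globally defined on $Ob_X$, these local statements agree over the overlaps $X_\albe$, hence glue to $\cC_X\in Z_*(Ob_X(\sigma))$.

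\textbf{Step 2.} Let $B\in Z_*(Ob_X(\sigma))$ be a prime cycle and $(f:S\to Y,\ \cV\to f^*Ob_X,\ \tilde B\subset\cV)$ a proper representative of $B$ as in Definition \ref{2.50}, and let $\tilde\sigma:\cV\to\sO_S$ be the composite of $\cV\to f^*Ob_X$ with $f^*\sigma$. Then $f$ restricts to a proper morphism $S(\tilde\sigma)\to X(\sigma)$ (since $f(S(\tilde\sigma))\subset X(\sigma)\cap Y$), and $\tilde B\subset\cV(\tilde\sigma)$ by the definition of $Z_*(Ob_X(\sigma))$. Using the cosection-localized Gysin map $0^!_{\cV,\loc}:A_*(\cV(\tilde\sigma))\to A_*(S(\tilde\sigma))$ of \cite{KLc} for the vector bundle $\cV$ with cosection $\tilde\sigma$, I define
\[ 0^!_{Ob_X,\loc}[B]=\frac{1}{\deg f}\,(f|_{S(\tilde\sigma)})_*\bigl(0^!_{\cV,\loc}[\tilde B]\bigr)\in A_*(X(\sigma)). \]
That this is independent of the chosen proper representative and descends to rational equivalence is proved exactly as in \cite[Proposition 3.4 and Corollary 3.6]{CLs}, the only change being to replace the ordinary vector-bundle Gysin map $0^!_{\cV}$ everywhere by $0^!_{\cV,\loc}$; this is legitimate because the only features of $0^!_{\cV}$ used in \cite{CLs} --- compatibility with flat pullback and proper pushforward, and commutativity with the Gysin maps of other bundles --- all hold for $0^!_{\cV,\loc}$ by \cite{KLc}, and because every cosection $\tilde\sigma$ occurring is pulled back from the single global $\sigma$, so the diagrams of \cite{CLs} comparing two proper representatives are automatically compatible with the cosections. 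Finally $\imath_*\circ 0^!_{Ob_X,\loc}=0^!_{Ob_X}$, since $\imath'_*\circ 0^!_{\cV,\loc}=0^!_{\cV}$ for the inclusion $\imath':S(\tilde\sigma)\hookrightarrow S$ by \cite{KLc} and proper pushforward is functorial.

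I expect the main obstacle to be the well-definedness in Step 2: that $0^!_{Ob_X,\loc}$ is independent of the chosen proper representative and respects rational equivalence. This requires carrying the cosections through all the comparison arguments of \cite[\S3]{CLs}. The ordinary Gysin map of a vector bundle is highly functorial, whereas $0^!_{\cV,\loc}$ is functorial only for cosection-compatible morphisms, so at each stage one must check that the morphisms relating two proper representatives respect the relevant cosections; since every cosection in sight is a pullback of $\sigma$ this is automatic, but it must be threaded through the entire argument. Once that is done, every lemma of \cite[\S3]{CLs} transfers, and the properties of $0^!_{\cV,\loc}$ needed along the way are all supplied by \cite{KLc}.
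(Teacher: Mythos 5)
Your proposal is correct and follows essentially the same route as the paper: it first shows $\cC_X\in Z_*(Ob_X(\sigma))$ by applying the cone-support lemma of \cite{KLc} on each chart $X\lalp$ and gluing, and then defines the localized Gysin map on a proper representative by $\frac{1}{\deg f}f'_*\,0^!_{\cV,\sigma}[\tilde B]$, which is exactly the paper's formula \eqref{3.93}, with the same checks (independence of representative, invariance under rational equivalence via admissible functions, and $\imath_*\circ 0^!_{\cF,\sigma}=0^!_\cF\circ\tilde\imath_*$ from $\jmath_*0^!_{\cV,\sigma}=0^!_\cV$ and functoriality of proper pushforward). The only cosmetic difference is that the paper phrases $0^!_{\cV,\sigma}$ through the blowup of $S$ along $S(\sigma)$ and splits off the case of cycles supported in $\cF|_{X(\sigma)}$, which your single formula subsumes.
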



%


Since $Ob_X|_{X\lalp}=h^1(E\lalp^\vee)$, we have a canonical morphism of stacks 
\beq\label{3.4}
\cE\lalp=h^1/h^0(E\lalp^\vee)\lra h^1(E\lalp^\vee)=Ob_X|_{X\lalp}.
\eeq
Let $U=X-X(\sigma)$ and $U\lalp=U\times_XX\lalp$. The cosection $\sigma$ in \eqref{3.2} together with
 \eqref{3.4} induces a morphism
\[
\bar\sigma\lalp: \cE\lalp|_{U\lalp}\lra Ob_{U\lalp}\mapright{\sigma} \sO_{U\lalp}.
\]
Then by \cite[Proposition 4.3]{KLc}, $[\mathbf{c}_{X\lalp}]\in Z_*(\cE\lalp(\sigma))$ where 
\[
\cE\lalp(\sigma)=\cE\lalp|_{X\lalp(\sigma)}\cup \ker[\bar\sigma\lalp: \cE\lalp|_{U\lalp}\rightarrow \sO_{U\lalp}],\quad X\lalp(\sigma)=X\lalp\times_XX(\sigma).
\]
Hence the image of $[\mathbf{c}_{X\lalp}]$ by $\cE\lalp\to Ob_{X\lalp}$ lies in $Z_*(Ob_{X\lalp}(\sigma))$ where $$Ob_{X\lalp}(\sigma)=Ob_{X\lalp}|_{X\lalp(\sigma)}\cup \ker[\sigma\lalp:Ob_{U\lalp}\to \sO_{U\lalp}].$$ Since $\cC_X$ is the gluing of the images of $[\mathbf{c}_{X\lalp}]$ and $Ob_X$ is the gluing of $Ob_{X\lalp}$ while $\sigma$ is the gluing of $\sigma\lalp$, 
letting $$Ob_X(\sigma)=Ob_X|_{X(\sigma)}\cup \ker[\sigma_U:Ob_U\twoheadrightarrow \sO_U],$$ 
we find that 
\beq\label{3.8}
[\cC_X]\in A_*(Ob_X(\sigma)).
\eeq

\medskip 

Next, we generalize the cosection localized Gysin map (cf. \cite[Proposition 1.3]{KLc}).
\begin{prop}\label{3.9}
Let $\cF$ be a coherent sheaf on a \DM stack $X$ and let $\sigma:\cF\to \sO_X$ be a nonzero cosection. 
Let $X(\sigma)$ denote the zero locus of $\sigma$ and $U=X-X(\sigma)$ so that $\sigma$ is surjective over $U$. Let 
$\cF(\sigma)=\cF|_{X(\sigma)}\cup \ker[\sigma_U:\cF|_U\twoheadrightarrow \sO_U].$ Then there is a homomorphism
\beq\label{3.10}
0_{\cF,\sigma}^!:A_*(\cF(\sigma))\lra A_*(X(\sigma))
\eeq
which we call the \emph{localized Gysin map}, such that 
\beq\label{3.12}
\imath_*\circ 0^!_{\cF,\sigma}=0_\cF^!\circ \tilde{\imath}_*:A_*\cF(\sigma)\lra A_*(X)
\eeq
where $\imath:X(\sigma)\to X$ and $\tilde{\imath}:\cF(\sigma)\to \cF$ denote the inclusions and $0_\cF^!$ is the Gysin map in Definition \ref{2.54}.  
\end{prop}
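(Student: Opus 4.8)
The plan is to reduce the entire construction to the cosection-localized Gysin map for vector bundles on schemes established in \cite[Proposition 1.3]{KLc}, by running the proper-representative formalism used to define the ordinary Gysin map $0^!_\cF$ in Definition \ref{2.54}, with $0^!_\cV$ everywhere replaced by its cosection-localized version.

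First I would define the map on a prime cycle. Let $B\in Z_*(\cF(\sigma))$ and choose a proper representative $(f:S\to Y,\ \cV\to f^*\cF,\ \tilde B\subset\cV)$ with $Y\subset X$ the closure of the image of $B$; since $B$ lies on $\cF(\sigma)$ we may arrange $\tilde B\subset \cV|_{S\times_XX(\sigma)}\cup\ker(\tilde\sigma)$, where $\tilde\sigma:\cV\to\sO_S$ is the composite of $\cV\to f^*\cF$ with $f^*\sigma$. As $\cV\to f^*\cF$ is surjective, $\tilde\sigma(\cV)=f^*(\sigma(\cF))\cdot\sO_S$, so the zero locus $S(\tilde\sigma)$ equals $S\times_XX(\sigma)$ and $\tilde\sigma$ is a nonzero cosection of the bundle $\cV$ on $S$ with $[\tilde B]\in Z_*(\cV(\tilde\sigma))$ in the sense of \cite{KLc}. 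I then set
\[
0^!_{\cF,\sigma}[B]\ \defeq\ \frac{1}{\deg f}\,\bigl(f|_{S(\tilde\sigma)}\bigr)_*\,0^!_{\cV,\tilde\sigma}[\tilde B]\ \in\ A_*\!\bigl(Y\cap X(\sigma)\bigr)\ \subset\ A_*(X(\sigma)),
\]
where $0^!_{\cV,\tilde\sigma}$ is the localized Gysin map of \cite[Proposition 1.3]{KLc}. (When $Y\subset X(\sigma)$ one has $\cV(\tilde\sigma)=\cV$ and $0^!_{\cV,\tilde\sigma}=0^!_\cV$, so this reduces to $0^!_\cF[B]$, consistent with Definition \ref{2.54}.)

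Next I would prove independence of the proper representative, which I expect to be the main obstacle. Given a second representative $(f',\cV',\tilde B')$, pass to a common dominating one $(\hat f,\hat\cV,\hat B)$ fitting into the comparison diagrams of \S\ref{S2.2}, with $\hat f=fg={f'}g'$ and surjections $\hat\cV\twoheadrightarrow g^*\cV$, $\hat\cV\twoheadrightarrow {g'}^*\cV'$ under which the cosection $\widehat{\tilde\sigma}$ on $\hat\cV$ is pulled back from the cosections on $g^*\cV$ and ${g'}^*\cV'$. The equality of the two pushforwards then follows from three compatibilities of the localized vector-bundle Gysin map, exactly paralleling the proof of \cite[Proposition 3.4]{CLs}: (i) compatibility with flat/smooth pullback along $g,g'$ (noting $S(\widehat{\tilde\sigma})=g^{-1}S(\tilde\sigma)$); (ii) compatibility with proper pushforward, which matches the degrees of the generically \'etale maps; and (iii) for a short exact sequence $0\to\hat K\to\hat\cV\to g^*\cV\to 0$ of bundles whose cosection is pulled back from the quotient, the factorization of $0^!_{\hat\cV,\widehat{\tilde\sigma}}$ through $0^!_{g^*\cV,\,g^*\tilde\sigma}$ and the subbundle Gysin map for $\hat K$ — the cosection-localized analogue of the short-exact-sequence compatibility of ordinary Gysin maps used in \emph{loc.\ cit.} Items (i) and (ii) are in \cite{KLc}; (iii) is the one genuinely new input, and is the point I expect to cost the most work. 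Assembling these as in \cite{CLs} shows $0^!_{\cF,\sigma}$ is well defined on $Z_*(\cF(\sigma))$.

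Finally I would check descent to rational equivalence and the identity \eqref{3.12}. For the former, take a prime cycle $W$ on $\cF(\sigma)$, an admissible $h$, and a proper representative of $W$; then $\tilde W\subset\cV(\tilde\sigma)$, the class on $\tilde W$ representing $\partial(W,h)$ is the principal divisor $\partial(\tilde W,h_f)$ (with the multiplicities in the definition of $\partial(W,h)$), which is rationally equivalent to $0$ in $A_*(\cV(\tilde\sigma))$; since $0^!_{\cV,\tilde\sigma}$ preserves rational equivalence on $A_*(\cV(\tilde\sigma))$ by \cite[Proposition 1.3]{KLc} and proper pushforward preserves rational equivalence, the same bookkeeping as \cite[Corollary 3.6]{CLs} gives $0^!_{\cF,\sigma}[\partial(W,h)]=0$, so $0^!_{\cF,\sigma}$ descends to $A_*(\cF(\sigma))\to A_*(X(\sigma))$. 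For \eqref{3.12}, apply $\imath_*$ to the displayed formula and use the vector-bundle identity $\imath_*\circ 0^!_{\cV,\tilde\sigma}=0^!_\cV\circ(\tilde\imath_\cV)_*$ of \cite{KLc} together with Definition \ref{2.54}:
\[
\imath_*\,0^!_{\cF,\sigma}[B]\ =\ \tfrac1{\deg f}\,f_*\,0^!_\cV[\tilde B]\ =\ 0^!_\cF\circ\tilde\imath_*[B],
\]
which is \eqref{3.12}. Apart from item (iii), the entire argument is a transcription of the proper-representative machinery of \cite{CLs} with the cosection-localized Gysin map of \cite{KLc} in place of the ordinary one.
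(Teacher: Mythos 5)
Your proposal is correct and follows essentially the same route as the paper: the paper defines $0^!_{\cF,\sigma}[B]=\frac{1}{\deg f}f'_*0^!_{\cV,\sigma}[\tilde B]$ on proper representatives (its \eqref{3.91}--\eqref{3.93}, explicitly remarked to be Definition \ref{2.54} with $0^!_\cV$ replaced by the localized Gysin map of \cite{KLc}), sketches independence of the representative by passing to a dominating one, and proves descent to rational equivalence and \eqref{3.12} exactly via the properties $0^!_{\cV,\sigma}\partial(\tilde W,h_f)=0$ and $\jmath_*0^!_{\cV,\sigma}=0^!_\cV$ from \cite{KLc}, just as you do. The only presentational difference is that the paper unwinds the blowup construction of $0^!_{\cV,\sigma}$ explicitly, while you (reasonably) cite it as a black box and flag the short-exact-sequence compatibility needed for well-definedness, a detail the paper itself omits.
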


\begin{proof}
Let $B\in Z_*(\cF(\sigma))$ be a prime cycle. If $B\in Z_*(\cF|_{X(\sigma)})$, then we let
\beq\label{3.50}
0^!_{\cF,\sigma}[B]:=0^!_{\cF|_{X(\sigma)}}[B]\in A_*(X(\sigma))
\eeq
where $0^!_{\cF|_{X(\sigma)}}$ is the Gysin map of the sheaf $\cF|_{X(\sigma)}$ on $X(\sigma)$
defined in \cite[\S3]{CLs}. From now on, we let the prime cycle $B$ satisfy $B\notin Z_*(\cF|_{X(\sigma)})$. 

\def\tX{X^\#}
\def\tcF{{\cF^\#}}
\def\cI{\mathcal{I}}
\def\trho{\tilde{\rho}}
Let $\pi:\cF\to X$ denote the projection. Let $\rho:{X^\#}\to X$ be the blowup along $X(\sigma)$, i.e.  along the ideal $\cI_\sigma=\sigma(\cF)\subset \sO_X$ so that we have a surjective homomorphism
\beq\label{3.51}
\sigma^\#:\tcF=\rho^*\cF\lra \sO_{{X^\#}}(D)
\eeq
where $D$ is a Cartier divisor on $\widetilde{X}$ with support $|D|$ such that $\sO_{X^\#}(D)=\rho^{-1}\cI_\sigma\cdot\sO_{{X^\#}}$ and $\rho(|D|)\subset X(\sigma)$.  

For a proper representative $(f:S\to X,\cV\twoheadrightarrow f^*\cF,\tilde{B})$, let $S^\#$ be the blowup of $S$ along $S(\sigma)=X(\sigma)\times_XS$, so that we have a commutative diagram
\[ \xymatrix{
S^\#\ar[r]^{f^\#}\ar[d]_{\rho^\#} & X^\#\ar[d]^\rho\\
S\ar[r]_f & X.
}\]
Let $-D_S$ denote the exceptional divisor of $\rho^\#$. Let $\tilde{\cF}=f^*\cF$, $\cV^\#=(\rho^\#)^*\cV$ and $\tilde{\cF}^\#=(\rho\circ f^\#)^*\cF=(f\circ\rho^\#)^*\cF$. Let $\tilde{B}^\#\subset \cV^\#$ be the proper transform of $\tilde{B}\subset \cV$. The cosection $\sigma:\cF\to \sO_X$ and \eqref{3.51} induce a surjective homomorphism 
$$\tilde{\sigma}^\#:{\cV}^\#\twoheadrightarrow \tilde{\cF}^\#\twoheadrightarrow \sO_{S^\#}(D_S).$$ 
We now define 
\beq\label{3.91}
0^!_{\cF,\sigma}[B]:=\frac{1}{\deg f} {f}'_*(\rho^\#)'_*(D_S\cdot 0^!_{\ker\tilde{\sigma}^\#}[\tilde B^\#])\in A_*(X(\sigma)), 
\eeq
where ${f}':S(\sigma)\to X(\sigma)$ is the restriction of $f$ to $S(\sigma)$ and $(\rho^\#)':D_S\to S(\sigma)$ is the restriction of $\rho^\#$ to $D_S$. 
Here $D\cdot$ denotes the intersection with the divisor $D$ (cf. \cite[Chapter 2]{Fulton}).
By the definition of the localized Gysin map for vector bundles in \cite{KLc}, we have
\beq\label{3.92}
0^!_{\cV,\sigma}[\tilde B]=(\rho^\#)'_*(D_S\cdot 0^!_{\ker\tilde{\sigma}^\#}[\tilde B^\#])\in A_*(S(\sigma))\eeq 
so that we have
\beq\label{3.93}
0^!_{\cF,\sigma}[B]=\frac{1}{\deg f} {f}'_*0^!_{\cV,\sigma}[\tilde B]\in A_*(X(\sigma)).\eeq
It was also proved in \cite[\S2]{KLc} that $0^!_{\cV,\sigma}$ preserves rational equivalence. 

By the usual argument used in \cite[\S2]{KLc} or \cite[\S3]{CLs}, it is straightforward to see that $0^!_{\cF,\sigma}[B]$ is independent of the choice of a proper representative of $B$. Indeed, if we have two proper representatives $(f_1,\cV_1,\tilde{B}_1)$ and $(f_2,\cV_2,\tilde{B}_2)$, we can choose a third proper representative $(\hat{f},\hat{\cV},\hat{B})$ dominating the previous two. Then we can compare the cycles defined by the right side of \eqref{3.91} on $\hat{\cV}$. We omit the detail here. 


By extending \eqref{3.50} and \eqref{3.91} linearly, we obtain a homomorphism
\beq\label{3.53}
0^!_{\cF,\sigma}:Z_*(\cF(\sigma))\lra A_*(X(\sigma)).
\eeq
To prove that \eqref{3.53} preserves rational equivalence, 
let $W$ be a prime cycle on $\cF(\sigma)$ and $h$ be an admissible rational function on $W$. When $W\in Z_*(\cF|_{X(\sigma)})$, this fact was proved in \cite{CLs}. So we may assume $W\notin Z_*(\cF|_{X(\sigma)}).$ 
For a proper representative $(f:S\to X, \cV\to f^*\cF,\tilde{W})$ of $W$ and a rational function $h_f$ on $\tilde{W}\subset \cV$ representing $h$, we write the principal divisor of $h_f$ on $\tilde{W}$ as 
$\partial(\tilde{W},h_f)=\sum n_iD_i$ where $D_i$ are distinct prime divisors.
As we reviewed in \S\ref{S2.2} the principal divisor $\partial(W,h)$ of $h$ on $W$ is defined as 
$\partial (W,h)=\sum n_i W_ie_i/{\deg f}$ where $W_i$ is the image of $D_i$ in $\cF$ and $e_i$ is the degree of the morphism $\bar{W}_i\to W_i$ where $\bar{W}_i$ is the image of $D_i$ in $f^*\cF$. 
By \cite[\S3]{CLs}, if $(f_i,\cV_i,\tilde{W}_i)$ is a proper representative of $W_i$, then 
$$\frac1{e_i}f'_*0^!_{\cV,\sigma}D_i=\frac1{\deg f_i}{f'_i}_*0^!_{\cV_i,\sigma}[\tilde W_i]=0^!_{\cF,\sigma}[W_i].$$ Therefore we have
$$0^!_{\cF,\sigma}\partial(W,h)=0^!_{\cF,\sigma}\sum n_iW_i\cdot \frac{e_i}{\deg f}=\frac1{\deg f}\sum n_i\cdot e_i\cdot 0^!_{\cF,\sigma}W_i$$
$$=\frac1{\deg f}\sum n_i f'_*0^!_{\cV,\sigma}D_i=\frac1{\deg f}f'_*0^!_{\cV,\sigma}\sum n_iD_i=\frac1{\deg f}f'_*0^!_{\cV,\sigma}\partial(\tilde W,h_f).$$
Since $0^!_{\cV,\sigma}(\partial(\tilde{W},h_f))=0$ by \cite[\S2]{KLc}, we have 
$$0^!_{\cF,\sigma}\partial(W,h)=0$$
as desired.

Finally we prove \eqref{3.12}. For a prime cycle $B\in Z_*(\cF|_{X(\sigma)})$ and a proper representative $(f:S\to X, \cV\to f^*\cF, \tilde{B})$, we have a commutative square
$$\xymatrix{
S(\sigma)\ar[r]^{f'}\ar[d]_{\jmath} &X(\sigma)\ar[d]^{\imath}\\
S\ar[r]_f & X.
}$$
Since $\jmath_*0^!_{\cV,\sigma}[\tilde B]=0^!_\cV[\tilde B]$ by \cite[\S2]{KLc}, \eqref{3.12} follows from 
\beq\label{3.68}
\imath_*0^!_{\cF,\sigma}[B]=\frac{1}{\deg f} \imath_*f'_*0^!_{\cV,\sigma}[\tilde B]=\frac{1}{\deg f}f_*\jmath_*0^!_{\cV,\sigma}[\tilde B]=\frac{1}{\deg f}f_*0^!_{\cV}[\tilde B]=0^!_\cF[B].
\eeq
\end{proof}

Note that our definition \eqref{3.93} of the localized Gysin map $0^!_{\cF,\sigma}$ is the same as Definition \ref{2.54} with the ordinary Gysin map $0^!_{\cV}$ replaced by the localized Gysin map $0^!_{\cV,\mathrm{loc}}=0^!_{\cV,\sigma}$ defined in \cite[\S2]{KLc}.

\begin{proof}[Proof of Theorem \ref{3.0}]
By \eqref{3.8}, we have the virtual normal cone $[\cC_X]\in A_*(Ob_X(\sigma))$.
Using $0^!_{\cF,\sigma}$ in \eqref{3.10} with $\cF=Ob_X$, we define
\[
[X]\virt_\loc=0^!_{Ob_X,\sigma}[\cC_X]\in A_*(X(\sigma)) .
\] 
Since $[X]\virt=0^!_{Ob_X}[\cC_X]$ by definition, 
$$\imath_*[X]\virt_\loc=\imath_*0^!_{Ob_X,\sigma}[\cC_X]=0^!_{Ob_X}[\cC_X]=[X]\virt \in A_*(X)$$
by \eqref{3.12} \end{proof}

As in the case of ordinary virtual fundamental class, the localized virtual cycle $[X]\virt_\loc$ for a semi-perfect obstruction theory remains constant under deformation. Let $X$ be a \DM stack over $\CC$. Let $\{X\lalp\to X\}$ be an \'etale cover and let $\{\phi\lalp: E\lalp\to \bbL_{X\lalp} \}$ be a semi-perfect obstruction theory for $X$. 
Let $\fX$ be a \DM stack together with a morphism $\pi:\fX\to T$ to a pointed smooth curve $0\in T$. 

Suppose $X=\fX\times_T\{0\}$ and there is a semi-perfect obstruction theory
\beq\label{3.22}
\{\fX\lalp\to \fX\}, \quad \{\psi\lalp:F\lalp\to \bbL_{\fX\lalp}\},\quad X\lalp=\fX\lalp\times_T\{0\} 
\eeq
for $\fX$ together with commutative diagrams
\beq\label{3.23}
\xymatrix{
F\lalp|_{X\lalp}\ar[r]^{g\lalp} \ar[d] & E\lalp \ar[r]\ar[d] &\sO_{X\lalp}[1]\ar[d]^=\ar[r]&\\
\bbL_{\fX\lalp}|_{X\lalp}\ar[r]^{h\lalp} &\bbL_{X\lalp} \ar[r] & \bbL_{X\lalp/\fX\lalp}\ar[r] &
}\eeq
of distinguished triangles. We further assume that the homomorphisms
$$h^1(g\lalp^\vee):h^1(E\lalp^\vee)\lra h^1(F\lalp|_{X\lalp}^\vee)=h^1(F\lalp^\vee)|_{X\lalp}$$ 
glue to a homomorphism
\beq\label{3.24}
Ob_X\lra Ob_\fX|_X.
\eeq
Let us suppose that there is a homomorphism
\beq\label{3.25}
\tilde{\sigma}:Ob_\fX\lra  \sO_\fX
\eeq
and let 
$\sigma:Ob_X\to Ob_\fX|_X\mapright{\tilde \sigma|_X} \sO_X$ denote the composition of \eqref{3.25} and \eqref{3.24}, restricted to $X$. Let $\fX(\tilde \sigma)$ be the zero locus of $\tilde{\sigma}$ and $X(\sigma)$ be the zero locus of $\sigma$
so that we have a fiber square
\[\xymatrix{
X(\sigma)\ar[r]\ar[d] &\fX(\tilde\sigma)\ar[d]\\
\{0\}\ar[r]^\tau & T.
}\]
Let 
\beq\label{3.26}
\tau^!: A_*(\fX(\tilde\sigma))\lra A_*(X(\sigma))
\eeq
denote the Gysin map. 

\begin{theo}\label{3.30} $\tau^![\fX]\virt_\loc=[X]\virt_\loc.$
\end{theo}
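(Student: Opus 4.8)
The plan is to reduce the statement to the deformation invariance of the cosection-localized Gysin map for a single vector bundle, proved in \cite{KLc}, by working through a suitable proper representative of the virtual normal cone $\cC_\fX$. First I would observe that since $\cC_X\in A_*(Ob_X(\sigma))$ and $\cC_\fX\in A_*(Ob_\fX(\tilde\sigma))$ are the images of the intrinsic normal cones $\bfc_X$ and $\bfc_\fX$, and since $X=\fX\times_T\{0\}$ with $\pi:\fX\to T$ flat near $0$, the basic compatibility of intrinsic normal cones under base change (as in \cite{BeFa}, combined with the gluing statement for semi-perfect obstruction theories recalled before Definition \ref{2.10}) gives that $\cC_X$ is, up to rational equivalence, the Gysin pullback $\tau^!\cC_\fX$ in $A_*(Ob_X(\sigma))$, where the pullback is taken using the homomorphism $Ob_X\to Ob_\fX|_X$ of \eqref{3.24} and the divisor $\{0\}\hookrightarrow T$. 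This is the semi-perfect analogue of the corresponding step in \cite[\S4]{KLc}, and the étale-local compatibility conditions in Definition \ref{2.10} together with the commutative diagrams \eqref{3.23} are exactly what is needed to see the local cones match up after the gluing.

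Next I would pick a proper representative $(f:S\to Y,\ \cV\to f^*Ob_\fX,\ \widetilde{\cC}_\fX\subset\cV)$ of $\cC_\fX$ in the sense of Definition \ref{2.50}, chosen so that $S$ is flat over $T$ near the fiber over $0$ (shrinking $T$ if necessary, and using that a generically étale proper surjection from a quasi-projective scheme restricts well to the fiber); then $(f_0:S_0\to Y_0,\ \cV|_{S_0}\to f_0^*Ob_X,\ \widetilde{\cC}_X\subset\cV|_{S_0})$ is a proper representative of $\cC_X$, where the subscript $0$ denotes restriction to the fiber over $0\in T$. The cosections $\tilde\sigma$ and $\sigma$ pull back to cosections $\widetilde{\sigma}:\cV\to\sO_S$ and $\widetilde{\sigma}_0=\widetilde{\sigma}|_{S_0}:\cV|_{S_0}\to\sO_{S_0}$, compatibly with the zero loci, and $[\widetilde{\cC}_X]=\tau^![\widetilde{\cC}_\fX]$ in $A_*((\cV|_{S_0})(\widetilde\sigma_0))$ by the cone-compatibility from the previous paragraph together with \cite[Chapter 6]{Fulton}. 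By the definition \eqref{3.93} of the semi-perfect localized Gysin map, $[\fX]\virt_\loc=\tfrac1{\deg f}f'_*\,0^!_{\cV,\widetilde\sigma}[\widetilde{\cC}_\fX]$ and $[X]\virt_\loc=\tfrac1{\deg f}(f_0)'_*\,0^!_{\cV|_{S_0},\widetilde\sigma_0}[\widetilde{\cC}_X]$.

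It then remains to commute $\tau^!$ past $\tfrac1{\deg f}f'_*$ (which is standard, as $\tau^!$ commutes with proper pushforward, \cite[Chapter 6]{Fulton}) and to verify the key identity
\[
\tau^!\,0^!_{\cV,\widetilde\sigma}[\widetilde{\cC}_\fX]=0^!_{\cV|_{S_0},\widetilde\sigma_0}\,\tau^![\widetilde{\cC}_\fX]\in A_*(S_0(\widetilde\sigma_0))
\]
for the localized Gysin map of a vector bundle. This last identity is precisely the deformation invariance of the cosection-localized virtual cycle in the vector bundle setting, which is \cite[Theorem 5.2]{KLc} (or its proof there); one applies it to $\cV\to S$ with the cosection $\widetilde\sigma$ and the divisor $S_0\subset S$. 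Combining the three displayed reductions gives $\tau^![\fX]\virt_\loc=[X]\virt_\loc$.

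The main obstacle I anticipate is the first step: making precise and rigorous that the virtual normal cones are compatible under the Gysin pullback $\tau^!$ at the level of cycles on the \emph{sheaf stack} $Ob_\fX(\tilde\sigma)$, rather than merely after choosing one proper representative. This requires checking that the étale-local cones $\bfc_{X\lalp}\subset\cE\lalp$ are the restrictions (in the appropriate derived/deformation-theoretic sense, via the distinguished triangles \eqref{3.23}) of $\bfc_{\fX\lalp}\subset\cF\lalp$, and that these local identifications are compatible with the gluing data $\psi_\albe$ so as to descend to an equality in $A_*(Ob_X(\sigma))$; once this cone-compatibility is in hand, everything else is a bookkeeping reduction to results already established in \cite{KLc} and \cite{Fulton}.
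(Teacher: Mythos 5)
There is a genuine gap, and it sits exactly at the step you flag as the ``main obstacle'' but then dispatch with an incorrect justification. The paper's own (omitted) proof is an adaptation of \cite[Theorem 5.2]{KLc}, carried out after pushing all cycles from the vector bundle stacks $h^1/h^0(E\lalp^\vee)$ down to the obstruction sheaves via $h^1/h^0\to h^1$; the entire content of that adaptation is the statement you assert in your first step, namely that $[\cC_X]$ agrees with $\tau^![\cC_\fX]$ up to rational equivalence \emph{inside} $A_*(Ob_X(\sigma))$. This does not follow from ``compatibility of intrinsic normal cones under base change'': normal cones and intrinsic normal cones do not commute with base change, and $\bfc_X$ is genuinely different from the restriction or specialization of $\bfc_\fX$. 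What is actually needed is the compatibility triangles \eqref{3.23}, an \'etale-local rational equivalence produced by a deformation-space (Vistoli/\cite{KKP}-type) construction — the same mechanism used in Lemma \ref{4.92} for the torus case — the gluing of these local equivalences via the semi-perfect compatibility of obstruction assignments, and a cone-reduction argument showing the whole rational equivalence is supported in $Ob_X(\sigma)=Ob_X|_{X(\sigma)}\cup\ker(\sigma_U)$ so that the localized Gysin map of Proposition \ref{3.9} can be applied to it. None of this is supplied by the Behrend--Fantechi base-change statement you invoke, so your argument assumes the theorem's essential content.

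Two secondary points compound this. First, even choosing a proper representative $(f:S\to Y,\cV,\widetilde{\cC}_\fX)$ with $S$ flat over $T$, the restriction of $\widetilde{\cC}_\fX$ to the central fiber represents the specialization $\tau^![\widetilde{\cC}_\fX]$, not the cycle $\widetilde{\cC}_X$ attached to the semi-perfect obstruction theory of $X$; identifying the two is again precisely the missing rational equivalence, so your step 2 does not make step 1 any easier. Second, the identity you quote at the end — commuting $\tau^!$ with the localized Gysin map $0^!_{\cV,\widetilde\sigma}$ of a vector bundle over $S$ — is a basic compatibility of the localized Gysin map with divisor Gysin maps from \cite[\S2]{KLc}, whereas \cite[Theorem 5.2]{KLc} is exactly the cone/rational-equivalence statement your step 1 takes for granted; the citation structure inverts where the difficulty lies. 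To repair the proof you would need to carry out the \'etale-local rational equivalence and its $\sigma$-localized cone reduction, then glue and push to $Ob_X(\sigma)$, which is what the paper means by ``a straightforward adaptation of the proof of \cite[Theorem 5.2]{KLc}.''
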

By pushing the cycles to obstruction sheaves via the natural transformation $h^1/h^0\to h^1$, the proof is a straightforward adaptation of the proof of \cite[Theorem 5.2]{KLc} and so we omit it. See also the proof of \cite[Proposition 3.8]{CLs}.


\bigskip

\section{Torus localization for semi-perfect obstruction theory}\label{sec4}

In this section, we generalize the torus localization formula in \cite{GrPa} to the setting of semi-perfect obstruction theory. See \cite{CKL} for the torus localization formula for cosection localized virtual fundamental class. 

Let $X$ be a  \DM stack acted on by a torus $T=\CC^*$. Let $F$ be the $T$-fixed locus, locally defined by $\Spec A/A^{mv}$ on an equivariant \'etale $\Spec A\to X$, where $A^{mv}$ denotes the ideal generated by weight spaces corresponding to nontrivial $T$-weights. Let 
\[\imath:F\hookrightarrow X\]
denote the inclusion map. 

\begin{defi}\label{4.1}
A \emph{$T$-equivariant semi-perfect obstruction theory} on $X$ consists of 
\begin{enumerate}
\item a $T$-equivariant \'etale open cover $\{X\lalp\to X\}$ of $X$;
\item an object $E\lalp\in D([X\lalp/T])$ and a morphism 
$$\phi\lalp:E\lalp\to \bbL_{X\lalp}\text{  in } D([X\lalp/T])$$
\end{enumerate}
which is a perfect obstruction theory on $X\lalp$. 
\end{defi}
Here $D([X\lalp/T])$ denotes the derived category of $T$-equivariant quasi-coherent sheaves on $X\lalp$. 

The $T$-fixed locus $F$ admits an induced semi-perfect obstruction theory. Indeed, the $T$-equivariant perfect complex $E\lalp\in D^b([X\lalp/T])$ restricted to $F\lalp=F\times_XX\lalp$ decomposes into a direct sum
\beq\label{4.2}
E\lalp|_{F\lalp}=E\lalp|_{F\lalp}^{fix}\oplus E\lalp|_{F\lalp}^{mv}
\eeq
of the $T$-fixed part and moving part. Moreover $\phi\lalp:E\lalp\to \bbL_{X\lalp}$ also decomposes into the sum of 
\beq\label{4.3}
\phi\lalp^{fix}:E\lalp|_{F\lalp}^{fix}\to \bbL_{X\lalp}|_{F\lalp}^{fix} \and  \phi\lalp^{mv}:E\lalp|_{F\lalp}^{mv}\to \bbL_{X\lalp}|_{F\lalp}^{mv}.
\eeq
Since $F\lalp$ is $T$-fixed, the morphism $\bbL_{X\lalp}|_{F\lalp}\to \bbL_{F\lalp}$ factors through
\beq\label{4.4}
\bbL_{X\lalp}|_{F\lalp}^{fix}\lra \bbL_{F\lalp}.
\eeq
Let
\beq\label{4.7}
E\lalp^F:=E\lalp|_{F\lalp}^{fix},\quad N\lalp\virt:=(E\lalp|_{F\lalp}^{mv})^\vee.
\eeq
\begin{lemm}\label{4.5}
The composition of \eqref{4.3} and \eqref{4.4} gives us a perfect obstruction theory
\beq\label{4.6}
\phi\lalp^F:E\lalp^F\lra \bbL_{F\lalp}.
\eeq
These perfect obstruction theories $\{\phi\lalp^F:E\lalp^F\to\bbL_{F\lalp}\}$ form a semi-perfect obstruction theory on $F$, with respect to the \'etale cover $\{F\lalp\to F\}$.
\end{lemm}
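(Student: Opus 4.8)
The plan is to establish the two assertions in turn. For the first, note that chart by chart we are exactly in the setup of the torus localization of Graber--Pandharipande \cite{GrPa}: $X\lalp$ is a \DM stack carrying the $T$-equivariant perfect obstruction theory $\phi\lalp:E\lalp\to\bbL_{X\lalp}$, and $F\lalp=F\times_XX\lalp$ is its $T$-fixed locus. Since $T=\CC^*$ is linearly reductive, on $X\lalp$ the complex $E\lalp$ may be represented $T$-equivariantly by a two-term complex of locally free sheaves, and passing to the weight-zero summand of its restriction to $F\lalp$ again gives a two-term complex of locally free sheaves; hence $E\lalp^F=E\lalp|_{F\lalp}^{fix}$ is a perfect complex of amplitude $[-1,0]$. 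That $h^0(\phi\lalp^F)$ is an isomorphism and $h^{-1}(\phi\lalp^F)$ is an epimorphism then follows from the corresponding properties of $\phi\lalp$ by combining the decomposition \eqref{4.3}, the factorization \eqref{4.4} of $\bbL_{X\lalp}|_{F\lalp}\to\bbL_{F\lalp}$, and the exactness of the weight-zero summand functor on $T$-equivariant sheaves; this is precisely the argument of \cite{GrPa} carried out on each $X\lalp$.

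For the second assertion I would verify the two conditions of Definition \ref{2.10} for the family $\{\phi\lalp^F:E\lalp^F\to\bbL_{F\lalp}\}$ relative to the \'etale cover $\{F\lalp\to F\}$ (which is \'etale since $\{X\lalp\to X\}$ is), writing $F_\albe:=F\times_XX_\albe=F\lalp\times_{X\lalp}X_\albe$. First I would identify the obstruction sheaves: since restriction to $F\lalp$ is right exact it commutes with $h^1$ (the cokernel of a complex of amplitude $[0,1]$), and since the weight-zero summand functor is exact one obtains
$$h^1\bl (E\lalp^F)^\vee\br=\bl h^1(E\lalp^\vee)|_{F\lalp}\br^{fix}=\bl Ob_X|_{F\lalp}\br^{fix}.$$
The gluing isomorphisms $\psi_\albe$ of the semi-perfect obstruction theory $\{\phi\lalp\}$ --- which are part of the $T$-equivariant structure and may therefore be taken $T$-equivariant --- then restrict to $F_\albe$, preserve the weight-zero summands, and so induce isomorphisms $\psi_\albe^F$ that glue the sheaves $h^1((E\lalp^F)^\vee)$ to the coherent sheaf $Ob_F:=(Ob_X|_F)^{fix}$ on $F$. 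This gives condition (1).

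For condition (2), I would take an arbitrary infinitesimal lifting problem $(\bar g:\bar\Delta\to F_\albe,\ B\to\bar B)$ on $F_\albe$ and view it, via $F_\albe\hookrightarrow X_\albe$, as a lifting problem on $X_\albe$. Unwinding the definition of the obstruction class, and using that \eqref{4.3} makes $\bar g^*\phi\lalp$ block diagonal while the factorization \eqref{4.4} annihilates the moving part of $\bbL_{X\lalp}|_{F\lalp}\to\bbL_{F\lalp}$, one sees that under the decomposition of $h^1(E\lalp^\vee)|_x$ into its weight-zero summand $h^1((E\lalp^F)^\vee)|_x$ and its moving part, the moving component of $ob_{X_\albe}(\phi\lalp,\bar g,B,\bar B)$ vanishes and its weight-zero component equals $ob_{F_\albe}(\phi\lalp^F,\bar g,B,\bar B)$; the same holds with $\beta$ in place of $\alpha$. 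Since $\{\phi\lalp\}$ is a semi-perfect obstruction theory we have $ob_{X_\albe}(\phi\lalp,\bar g,B,\bar B)=\psi_\albe\bl ob_{X_\beta}(\phi_\beta,\bar g,B,\bar B)\br$; projecting onto the weight-zero summand, on which $\psi_\albe$ restricts to $\psi_\albe^F$, gives $ob_{F_\albe}(\phi\lalp^F,\bar g,B,\bar B)=\psi_\albe^F\bl ob_{F_\albe}(\phi_\beta^F,\bar g,B,\bar B)\br$, which is condition (2).

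The main obstacle is the bookkeeping in this last step, together with pinning down the points where $T$-equivariance is genuinely used: one must check that $h^1$ of a perfect $[-1,0]$-complex commutes with both restriction to $F\lalp$ and the $T$-weight decomposition, that the factorization \eqref{4.4} is natural enough to be compatible over the overlaps $F_\albe$ (which follows from functoriality of the cotangent complex), and that the gluing isomorphisms $\psi_\albe$ are $T$-equivariant --- the one place where equivariance of the gluing data enters. Once the moving component of $ob_{X_\albe}$ is seen to vanish, the comparison of obstruction assignments becomes formal.
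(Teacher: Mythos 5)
Your proposal is correct and follows essentially the same route as the paper: the first assertion is the chart-by-chart Graber--Pandharipande argument (the paper simply cites \cite[Lemma 3.3]{CKL} for this), and the second is proved exactly as you do, by gluing the fixed parts $h^1((E\lalp^F)^\vee)$ to $Ob_F=Ob_X|_F^{fix}$ and by viewing a lifting problem on $F_\albe$ as one on $X_\albe$, where $T$-invariance of $\bar g$ together with the factorization \eqref{4.4} forces the obstruction class to lie in the fixed summand and coincide with the fixed-locus obstruction class, so compatibility for $\{\phi\lalp\}$ descends to $\{\phi\lalp^F\}$. Your explicit remark that the gluing isomorphisms $\psi_\albe$ must be taken $T$-equivariant is a point the paper leaves implicit, but it is the same argument.
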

\begin{proof}
The first statement follows from \cite[Lemma 3.3]{CKL}. 
For the second, observe that we have a decomposition
\[
Ob_{X\lalp}|_{F\lalp}=h^1(E\lalp^\vee)|_{F\lalp}=h^1(E\lalp|_{F\lalp}^\vee)=h^1({E^F\lalp}^\vee)\oplus h^1(N\lalp\virt)
\]
into the direct sum of the $T$-fixed and moving parts. 
Since $\{Ob_{X\lalp}\}$ glue to $Ob_X$ by assumption, $\{Ob_{X\lalp}|_{F\lalp}\}$ 
glue to $Ob_X|_F$ and hence $$\{Ob_{F\lalp}:=h^1((E^F\lalp)^\vee)\}$$ glue to 
\beq\label{4.15}
Ob_F:=Ob_X|_F^{fix}.
\eeq

By composing with the inclusion $F\lalp\hookrightarrow X\lalp$, 
an infinitesimal lifting problem 
\[
\bar{g} :\Spec \bar B\lra F\lalp,\quad \bar B=B/I,\quad I\cdot m_B=0
\]
for $F\lalp$ with obstruction class
\[ ob_{F\lalp}(\phi^F\lalp,\bar g, B,\bar B)\in I\otimes_\CC Ob_{X\lalp}|_x^{fix}=I\otimes_\CC Ob_{F\lalp}|_x  \]
gives us an infinitesimal lifting problem 
\[
\bar g':\Spec \bar B\lra F\lalp\mapright{\imath} X\lalp
\]
for $X\lalp$ with obstruction class
\[ ob_{X\lalp}(\phi\lalp,\bar g', B,\bar B)\in I\otimes_\CC Ob_{X\lalp}|_x \]
where $x\in F$ is the image of the closed point in $\Spec \bar B$ by $\bar g$. 
Since $\bar g$ is $T$-invariant,
\[
ob_{X\lalp}(\phi\lalp,\bar g',B,\bar B)=ob_{F\lalp}(\phi^F\lalp,\bar g,B,\bar B)\in I\otimes_\CC Ob_{F\lalp}|_x\subset I\otimes_\CC Ob_{X\lalp}|_x.
\]
Since the perfect obstruction theories $E\lalp|_{X_\albe}$ and $E_\beta|_{X_\albe}$ give the same obstruction assignment for $X_\albe=X\lalp\times_XX_\beta$, we find that
the same holds for $E\lalp|^{fix}_{F_\albe}$ and $E_\beta|^{fix}_{F_\albe}$. This proves the lemma.
\end{proof}

Recall that in \cite{GrPa}, the authors proved the torus localization formula assuming the existence of \begin{enumerate}
\item a global $T$-equivariant embedding of $X$ into a smooth \DM stack and 
\item a global resolution $[E^{-1}\to E^0]$ of the perfect obstruction theory $E\to\bbL_X$ by locally free sheaves $E^{-1}$ and $E^0$. 
\end{enumerate}
In \cite[\S3]{CKL}, (1) was completely removed and (2) was replaced by a weaker assumption that the virtual normal bundle $N\virt=(E|_F^{mv})^\vee$ admits a global 2-term resolution $[N_0\to N_1]$ by locally free sheaves $N_0, N_1$ over $F$. 

For the torus localization formula in the setting of semi-perfect obstruction theory, 
we assume the following. 

\begin{assu}\label{4.8}
There is a homomorphism
\beq\label{4.9}
N\virt=[N_0\to N_1]
\eeq
of locally free sheaves on $F$ whose cokernel $h^1(N\virt)$ is isomorphic to $Ob_X|_F^{mv}$ such that there is an isomorphism
\beq\label{4.22}
N\virt|_{F\lalp}\cong N\lalp\virt = (E\lalp|_{F\lalp}^{mv})^\vee,\quad \forall \alpha 
\eeq
in the derived category $D([F\lalp/T])$ whose $h^1$ is the restriction of the isomorphism $h^1(N\virt)\cong Ob_X|_F^{mv}$ to $F\lalp$. 
\end{assu}

\begin{defi}\label{4.12}
Under Assumption \ref{4.8}, the Euler class of $N\virt$ is 
\[ e(N\virt)=\frac{e(N_0)}{e(N_1)}\in A_*(F)\otimes_\QQ \QQ[t,t^{-1}] \]
where $e(-)$ denotes the equivariant top Chern class and $t$ is the generator of the equivariant Chow ring $A^*_T(pt)$. 
\end{defi}

\begin{theo}\label{4.10}
Under Assumption \ref{4.8}, we have
\beq\label{4.11}
[X]\virt=\imath_* \frac{[F]\virt}{e(N\virt)}\in A^T_*(X)\otimes_{\QQ[t]}\QQ[t,t^{-1}]
\eeq
where $[X]\virt$ and $[F]\virt$ are the virtual fundamental classes of $X$ and $F$ with respect to the semi-perfect obstruction theories $\{\phi\lalp:E\lalp\to \bbL_{X\lalp}\}$ and $\{\phi\lalp^F:E\lalp^F\to \bbL_{F\lalp}\}$ using the notation above.
\end{theo}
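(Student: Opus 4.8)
The plan is to reduce the statement to the known torus localization formula for ordinary perfect obstruction theories, proved in \cite{GrPa} and refined in \cite[\S3]{CKL}, by working locally on the \'etale cover and then gluing the resulting cone cycles on the obstruction sheaves. First I would recall that both sides of \eqref{4.11} are defined by pushing cone cycles forward through generalized Gysin maps for coherent sheaves: $[X]\virt = 0^!_{Ob_X}[\cC_X]$ and $[F]\virt = 0^!_{Ob_F}[\cC_F]$, where $\cC_X \in Z_*(Ob_X)$ and $\cC_F\in Z_*(Ob_F)$ are the glued virtual normal cones and $Ob_F = Ob_X|_F^{fix}$ by \eqref{4.15}. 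The key geometric input is the specialization of the intrinsic normal cone $\bfc_X$ to $\bfc_{X|_F}$ along the deformation to the normal cone of $F$ in $X$, exactly as in \cite{GrPa}; the point is to see that this construction is compatible with the \'etale cover and glues. On each chart $X\lalp$ we already have an honest perfect obstruction theory and Assumption \ref{4.8} provides a global $2$-term resolution $[N_0\to N_1]$ of $N\virt$ restricting to $N\lalp\virt$ on $F\lalp$; thus on each $F\lalp$ the Graber-Pandharipande / \cite{CKL} argument applies verbatim to express $0^!_{Ob_{X\lalp}|_{F\lalp}}$ of the (specialized) cone in terms of $0^!_{Ob_{F\lalp}}$ of the fixed-part cone, divided by $e(N\lalp\virt) = e(N_0)/e(N_1)|_{F\lalp}$.

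The steps, in order, would be: (i) form the deformation to the normal cone $M^\circ_F X \to \A^1$, which is $T$-equivariant and \'etale-locally compatible with the $X\lalp$, and observe the flat family of cones $\bfc_X \rightsquigarrow C_{\bfc_X}\bfc_{X|_F}$ inside $\fN_X$, giving a rational equivalence on the sheaf stack $Ob_X$ between $[\cC_X]$ and the image $\cC'$ of the special fiber, using the rational-equivalence calculus on sheaf stacks from \S\ref{S2.2}; (ii) identify the special-fiber cone $\cC'$, on each $F\lalp$, with the obstruction cone of the perfect obstruction theory $E\lalp|_{F\lalp}\to\bbL_{X\lalp}|_{F\lalp}$, which in turn by \cite[\S3]{CKL} and the direct sum decomposition \eqref{4.2}--\eqref{4.3} lives on the fiber product with $h^1(N\lalp\virt) = Ob_X|_F^{mv}|_{F\lalp}$ of the fixed-part cone $\cC_{F\lalp}$; (iii) invoke Assumption \ref{4.8} to globalize: the isomorphisms \eqref{4.22} are compatible on overlaps with the gluing data for $Ob_X|_F^{mv}$, so the local identifications in (ii) glue to an identification of cycles on the sheaf stack $Ob_F \oplus h^1(N\virt)$ over $F$; (iv) apply the localized Gysin map for the coherent sheaf $Ob_X|_F = Ob_F \oplus h^1(N\virt)$ and use the projection formula together with the fact that, after inverting $t$, the Gysin map through $h^1(N\virt)=\mathrm{coker}[N_0\to N_1]$ factors as $0^!_{N_1}\circ 0^!_{N_0}$-type operations, whose effect is division by $e(N_0)/e(N_1)$; this is the content of \cite[\S3]{CKL} once one has reduced to a global resolution.

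The main obstacle I expect is step (iii), the gluing: one must check that the isomorphisms in \eqref{4.22}, which are only asserted to exist chart-by-chart in $D([F\lalp/T])$, interact correctly on the double overlaps $F_\albe$ with the gluing isomorphisms $\psi_\albe$ for $Ob_X$, so that the locally-defined proper representatives of the cone cycles and the locally-defined Gysin-factorizations actually assemble into a well-defined global computation in $A_*(\cF)$ for $\cF = Ob_X|_F$. The subtlety is that the derived-category isomorphisms $N\virt|_{F\lalp}\cong N\lalp\virt$ need not themselves be compatible on overlaps --- only their $h^1$'s are, by hypothesis --- so the argument must be phrased entirely in terms of the cone cycles in $Z_*(Ob_X|_F)$ and the fixed resolution $[N_0\to N_1]$, never in terms of the complexes $E\lalp$ themselves beyond what is needed to identify the special-fiber cone as a cycle. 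Once the computation is reorganized so that it only uses (a) the glued sheaf $Ob_X$ with its fixed/moving splitting over $F$ and (b) the single global resolution $[N_0\to N_1]$, the \cite{GrPa}--\cite{CKL} machinery runs without change and yields \eqref{4.11}; equivariant deformation invariance (Theorem \ref{3.30}, applied over $\A^1$) then justifies the rational equivalence used in step (i).
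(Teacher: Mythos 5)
Your overall instinct---deform the cone, work only with $h^1$-level data and the single global resolution $[N_0\to N_1]$, and worry about gluing rather than about the chart-wise complexes---matches the spirit of the paper, and your step (iii) concern is exactly the point handled there (the interpolating sheaf is built from $N_0,N_1,d,x_0,x_1$, which are global, so only the $h^1$-compatibility of \eqref{4.22} is ever used). But step (i) contains a genuine error: there can be no rational equivalence in $A_*(Ob_X)$ between $[\cC_X]$ and a cycle $\cC'$ supported over $F$. If there were, applying $0^!_{Ob_X}$ would give $[X]\virt=\imath_*(\cdot)$ in $A_*^T(X)$ \emph{before} inverting $t$, which is false in general (already for $X$ smooth with trivial obstruction the fundamental class is not supported on the fixed locus). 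The special fiber of your deformation does not even live in $Ob_X$: it is $\bfc_{F/\cC_X}$, a cycle over $F$ sitting in $Ob_X|_F\oplus N_0$. The paper's route is different at precisely this point: it never moves $[\cC_X]$ inside $Ob_X$, but instead defines a Gysin pullback $\imath^!:A_*(X)\to A_*(F)$ using the embedding $\bfc_{F/X}\hookrightarrow N_0$, proves the commutation $\imath^!0^!_{Ob_X}[\cC_X]=0^!_{Ob_X|_F\oplus N_0}[\bfc_{F/\cC_X}]$ by a proper-representative argument (Lemma \ref{4.91}), and only then runs a deformation---the Kim--Kresch--Pantev double deformation space, glued into a sheaf $\Xi$ on $F\times\PP^1$ whose fibers are $Ob_X|_F\oplus N_0$ at $0$ and $Ob_F\oplus N_1$ at $\infty$ (this is why the auxiliary semi-perfect obstruction theory with enlarged obstruction sheaf $Ob_F\oplus N_1$ is introduced in Lemma \ref{4.14})---to get $[\bfc_{F/\cC_X}]\sim[\cC_F]$ entirely over $F$, hence $\imath^![X]\virt=e(N_1)\cap[F]\virt$.

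The second gap is in your step (iv): ``the Gysin map through $h^1(N\virt)$ factors \ldots whose effect is division by $e(N_0)/e(N_1)$'' is not a construction---there is no Gysin operation attached to the coherent sheaf $h^1(N\virt)$ that divides by Euler classes, and division only makes sense once you know the class lies in the image of $\imath_*$ over $\QQ[t,t^{-1}]$. The actual mechanism, which your proposal never names, is Kresch's localization theorem \cite[Theorem 6.3.5]{Kres}: $\imath_*$ is an isomorphism after tensoring with $\QQ[t,t^{-1}]$, so one may write $[X]\virt=\imath_*\gamma$, combine the self-intersection identity $\imath^!\imath_*\gamma=e(N_0)\cap\gamma$ (immediate from the definition of $\imath^!$ via $N_0$) with Proposition \ref{4.20} to get $e(N_0)\cap\gamma=e(N_1)\cap[F]\virt$, and then solve for $\gamma=[F]\virt/e(N\virt)$. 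Without this last step your plan computes nothing about $[X]\virt$ itself; and your appeal to Theorem \ref{3.30} (deformation invariance of cosection-localized classes) is not the statement that justifies the cone specialization---the rational equivalence comes from the $\PP^1$-family of cones inside $\Xi$, not from deformation invariance of virtual classes.
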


To prove Theorem \ref{4.10}, we introduce another semi-perfect obstruction theory for $F$, whose obstruction sheaf is enlarged by $N_1$ so that the inclusion $\imath:F\lalp\to X\lalp$ is virtually smooth. Let
\beq\label{4.13}
\widetilde{E}^F\lalp=E^F\lalp\oplus N_1|_{F\lalp}^\vee[1]\and 
\tilde{\phi}^F\lalp:\widetilde{E}^F\lalp\lra E^F\lalp\mapright{\phi^F\lalp} \bbL_{F\lalp}
\eeq
where the first arrow $\widetilde{E}^F\lalp\to E^F\lalp$ is the projection to the direct summand $E^F\lalp$ defined in \eqref{4.7}. Since $\phi^F\lalp:E^F\lalp\to \bbL_{F\lalp}$ is a perfect obstruction theory by Lemma \ref{4.5}, $\tilde{\phi}\lalp^F$ is also a perfect obstruction theory for $F\lalp$. Since the obstruction sheaves
$\{h^1((E^F\lalp)^\vee)\}$ glue to $Ob_F=Ob_X|_F^{fix}$, the obstruction sheaves 
\[ h^1((\widetilde{E}^F\lalp)^\vee)= h^1((E^F\lalp)^\vee)\oplus h^1(N_1|_{F\lalp}[-1])  \]
glue to 
\[ \widetilde{Ob}_F=Ob_F\oplus N_1 .\]
As $\tilde{\phi}\lalp^F$ factors through $\phi^F\lalp$ in \eqref{4.13}, $\tilde{\phi}^F\lalp|_{F_\albe}$ and $\tilde{\phi}^F_\beta|_{F_\albe}$ give the same obstruction assignment where $F_\albe=F\lalp\times_FF_\beta=X_\albe\times_XF$. Hence we proved the following.

\begin{lemm}\label{4.14}
\eqref{4.13} defines a semi-perfect obstruction theory on $F$ with obstruction sheaf $\widetilde{Ob}_F=Ob_F\oplus N_1$ where $Ob_F=Ob_X|_F^{fix}$ is the obstruction sheaf of $F$ with respect to the semi-perfect obstruction theory in Lemma \ref{4.5}.
\end{lemm}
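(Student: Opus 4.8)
The plan is to verify, condition by condition, that the collection $\{\tilde{\phi}^F\lalp:\widetilde{E}^F\lalp\to\bbL_{F\lalp}\}$ of \eqref{4.13} satisfies Definition~\ref{2.10}, bootstrapping everything off the semi-perfect obstruction theory $\{\phi^F\lalp:E^F\lalp\to\bbL_{F\lalp}\}$ produced in Lemma~\ref{4.5}. Since the extra summand $N_1|_{F\lalp}^\vee[1]$ is a locally free sheaf placed in cohomological degree $-1$ and $\tilde{\phi}^F\lalp$ is obtained from $\phi^F\lalp$ by precomposing with a projection, adding it changes nothing essential at the level of perfect obstruction theories or obstruction assignments; the work is purely bookkeeping.

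First I would check that each $\tilde{\phi}^F\lalp$ is a perfect obstruction theory on $F\lalp$. Locally writing $E^F\lalp\cong[E^{-1}\to E^0]$ with $E^{-1},E^0$ locally free gives $\widetilde{E}^F\lalp\cong[E^{-1}\oplus N_1|_{F\lalp}^\vee\to E^0]$, again a two-term complex of locally free sheaves. As the first arrow in \eqref{4.13} is the projection onto $E^F\lalp$, on $h^0$ the map $\tilde{\phi}^F\lalp$ is $h^0(\phi^F\lalp)$, an isomorphism, while on $h^{-1}$ it is the composite of the projection $h^{-1}(\widetilde{E}^F\lalp)=h^{-1}(E^F\lalp)\oplus N_1|_{F\lalp}^\vee\twoheadrightarrow h^{-1}(E^F\lalp)$ with the surjection $h^{-1}(\phi^F\lalp)$, hence surjective. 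Dualizing, $(\widetilde{E}^F\lalp)^\vee=(E^F\lalp)^\vee\oplus N_1|_{F\lalp}[-1]$, so $h^1((\widetilde{E}^F\lalp)^\vee)=Ob_{F\lalp}\oplus N_1|_{F\lalp}$ with $Ob_{F\lalp}=h^1((E^F\lalp)^\vee)$. By Lemma~\ref{4.5} the sheaves $\{Ob_{F\lalp}\}$ glue to $Ob_F=Ob_X|_F^{fix}$ via isomorphisms $\psi_\albe$, so $\{\psi_\albe\oplus\id_{N_1}\}$ glue $\{Ob_{F\lalp}\oplus N_1|_{F\lalp}\}$ to $\widetilde{Ob}_F=Ob_F\oplus N_1$ — the cocycle condition for $\{\psi_\albe\oplus\id_{N_1}\}$ being immediate from that for $\{\psi_\albe\}$ — which is condition~(1) of Definition~\ref{2.10}.

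For condition~(2), let $(\bar g,B,\bar B)$ be an infinitesimal lifting problem on $F_\albe$ with image point $x$. The obstruction class $ob_{F\lalp}(\tilde{\phi}^F\lalp,\bar g,B,\bar B)\in I\otimes_\CC(Ob_{F\lalp}\oplus N_1)|_x$ is represented by the composite $\bar g^*\widetilde{E}^F\lalp\to\bar g^*\bbL_{F_\albe}\to I[1]$, which by \eqref{4.13} factors through the projection $\bar g^*\widetilde{E}^F\lalp\to\bar g^*E^F\lalp$; hence under the splitting $\Ext^1(\bar g^*\widetilde{E}^F\lalp,I)=\Ext^1(\bar g^*E^F\lalp,I)\oplus\Ext^1(\bar g^*N_1^\vee[1],I)$ this class equals $(ob_{F\lalp}(\phi^F\lalp,\bar g,B,\bar B),0)$. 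Since $\phi^F\lalp$ and $\phi^F_\beta$ give the same obstruction assignment via $\psi_\albe$ by Lemma~\ref{4.5}, and $\psi_\albe\oplus\id_{N_1}$ respects the direct-sum splitting, the same holds for $\tilde{\phi}^F\lalp$ and $\tilde{\phi}^F_\beta$ via $\psi_\albe\oplus\id_{N_1}$, which is condition~(2). The only step with any friction is this last one, where one must track the direct-sum splitting through the $\Ext^1$ computation of the obstruction class; everything else is a routine consequence of Lemma~\ref{4.5}.
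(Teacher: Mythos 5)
Your proposal is correct and follows essentially the same route as the paper: the paper's argument (the paragraph preceding Lemma \ref{4.14}) likewise notes that $\tilde{\phi}^F\lalp$ is a perfect obstruction theory because it factors through $\phi^F\lalp$, that $h^1((\widetilde{E}^F\lalp)^\vee)=h^1((E^F\lalp)^\vee)\oplus N_1|_{F\lalp}$ glues to $Ob_F\oplus N_1$, and that the factorization through the projection forces the obstruction assignments to agree. You merely spell out the details (the local two-term resolution, the cocycle for $\psi_\albe\oplus\id_{N_1}$, and the $\Ext^1$ splitting giving the class $(ob_{F\lalp}(\phi^F\lalp,\bar g,B,\bar B),0)$), all of which is accurate.
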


Note that the new semi-perfect obstruction theory $\{(\widetilde{E}^F\lalp,\tilde{\phi}\lalp^F)\}$
fits into the commutative diagram
\beq\label{4.16} \xymatrix{
E\lalp|_{F\lalp}\ar[r]\ar[d] & \widetilde{E}^F\lalp\ar[d]\ar[r] & N_0^\vee|_{F\lalp}[1]\ar[d]\ar[r]&\\
\bbL_{X\lalp}|_{F\lalp}\ar[r] & \bbL_{F\lalp}\ar[r] & \bbL_{F\lalp/X\lalp}[1]\ar[r]&.
}\eeq
Since $F\hookrightarrow X$ is an embedding, the normal cone $\bfc_{F/X}=C_{F/X}$ embeds into the normal sheaf $\mathfrak{N}_{F/X}$ which is the kernel of \eqref{4.9}. Hence we have the embedding
\beq\label{4.17}
\bfc_{F/X}\hookrightarrow N_0.
\eeq
This gives us the pullback map 
\beq\label{4.18}
\imath^!:A_*(X)\lra A_*(F)
\eeq
defined by
\beq\label{4.19}
\imath^![B]=0^!_{N_0}[\bfc_{B\times_XF/B}]
\eeq
for any prime cycle $B$ in $X$ via the embedding
\[ 
\bfc_{B\times_XF/B}\subset \bfc_{F/X} \subset N_0.
\]

\begin{prop}\label{4.20}
$\imath^![X]\virt=e(N_1)\cap [F]\virt.$
\end{prop}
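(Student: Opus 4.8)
The statement is the semi-perfect analogue of the key computation in Graber–Pandharipande/\cite{CKL} that the Gysin pullback to the fixed locus of the virtual cycle equals $e(N_1)$ capped with $[F]\virt$. The plan is to work with the enlarged semi-perfect obstruction theory $\{(\widetilde{E}^F\lalp,\tilde\phi^F\lalp)\}$ from Lemma~\ref{4.14} and the commutative diagram \eqref{4.16}, and to reduce everything to an intersection-theoretic identity on a single proper representative, where the classical arguments of \cite{CKL} apply verbatim. First I would set up the relevant cones and coherent sheaves on $F$: the obstruction cone $\cC_X\in Z_*(Ob_X)$, the normal cone $\bfc_{F/X}\hookrightarrow N_0$ of \eqref{4.17}, and the cone $\cC_F\subset Ob_F$ for $F$. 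The definition \eqref{4.19} of $\imath^!$ says $\imath^![X]\virt=0^!_{N_0}[\bfc_{(\cdot)\times_XF/(\cdot)}]$ applied to the prime cycles appearing in a proper representative of $\cC_X$, so the first real step is to understand $\imath^![X]\virt$ as a class obtained by a Gysin pullback applied to $\cC_X$ itself, using compatibility of $0^!$ with refined Gysin maps for coherent sheaves (which follows from the corresponding statement for vector bundles in \cite{Fulton,CKL} after passing to a proper representative).

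**Key steps.** (1) Fix a proper representative $(f:S\to X,\cV\twoheadrightarrow f^*Ob_X,\widetilde{\cC})$ of (the prime cycles constituting) $\cC_X$; pulling back along $S(\sigma)$-free loci and along $S_F:=S\times_XF$ gives a vector bundle $\cV_F=\cV|_{S_F}$ with $\cV_F\twoheadrightarrow f_F^*Ob_X|_F$, which splits as $\cV_F^{fix}\oplus\cV_F^{mv}$ compatibly with $Ob_X|_F=Ob_F\oplus\,h^1(N\virt)$ by Assumption~\ref{4.8}. (2) On $S_F$, the diagram \eqref{4.16} together with \cite[Lemma~3.3, \S3]{CKL} identifies, inside $\cV_F$, the pullback of the cone $\widetilde{\cC}$ with a cone built from $\cC_F$ and the bundle $N_1|_{S_F}$: concretely $\widetilde{\cC}\times_{\cV}\cV_F$ maps onto $\bfc_{S_F/S}\subset N_0|_{S_F}$ and its fiber cone over $\cC_F$ is all of $N_1|_{S_F}^{\,mv}$-directions, exactly as in the computation of $\imath^![X]\virt$ in the perfect case. (3) Apply $0^!_{N_0|_{S_F}}$, push down by $f_F$, and divide by $\deg f$; by \cite[\S3]{CLs} this computes $\imath^![X]\virt$, and the excess-intersection / self-intersection formula \cite[Proposition~6.3]{Fulton} applied to the bundle map $N_1$ turns the $N_1$-cone contribution into the factor $e(N_1)$, leaving $0^!_{Ob_F}[\cC_F]=[F]\virt$. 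Finally, assemble: $\imath^![X]\virt=e(N_1)\cap[F]\virt$, independent of the chosen proper representative by the usual dominating-representative argument (as in the proof of Proposition~\ref{3.9}).

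**Main obstacle.** The genuinely new point—and the step I expect to be the crux—is justifying step~(2): that the decomposition $\cV_F=\cV_F^{fix}\oplus\cV_F^{mv}$ on a proper representative is compatible with the $T$-fixed/moving splitting of $Ob_X|_F$ coming from the local perfect obstruction theories, and that the pulled-back obstruction cone $\widetilde{\cC}|_{S_F}$ relates to $\bfc_{F/X}$ and $\cC_F$ exactly as dictated by \eqref{4.16}. This is local on the étale cover $\{X\lalp\}$: on each $X\lalp$ one has the honest perfect obstruction theory and \cite[\S3]{CKL} gives the identity of cones in $\cV|_{F\lalp}$; the work is to check these local identities glue, which is forced by the definition of semi-perfect obstruction theory (same obstruction assignments, hence $\bfc_{X\lalp}$ glue to $\cC_X$ in $Ob_X$ by the Proposition of \cite{CLs}) together with Assumption~\ref{4.8} (which makes $N\virt$, and in particular its resolution $[N_0\to N_1]$ and the cone $\bfc_{F/X}\hookrightarrow N_0$, global). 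Once the cone identity on $S_F$ is in hand, the rest is a mechanical application of the intersection theory for coherent sheaves developed in \S\ref{S2.2} together with Fulton's excess-intersection formula, paralleling \cite[Proposition~3.1 or its analogue]{CKL}, so I would present that part briefly and refer to \cite{CKL,Fulton} for the routine verifications.
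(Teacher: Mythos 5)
Your plan correctly reduces the problem to comparing $\imath^![X]\virt$ with data on the fixed locus, but there is a genuine gap at the heart of the argument, namely your step (2). There is no direct identification of the obstruction cone over the fixed locus with "a cone built from $\cC_F$ and $N_1$": the two sides do not even live in the same ambient sheaf. What enters the computation of $\imath^![X]\virt$ is the normal cone $\bfc_{F/\cC_X}$, a cycle on $Ob_X|_F\oplus N_0$, whereas $\cC_F$ is a cycle on $Ob_F$, and $N_1$ is not a subsheaf of $Ob_X|_F$ in any natural way (only its cokernel $h^1(N\virt)=Ob_X|_F^{mv}$ is). These cycles are related only by a rational equivalence, and producing that equivalence is the real content of the proposition. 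In the paper this is Lemma \ref{4.92}: one uses the double deformation space $M^\circ_{F\lalp\times\PP^1/M^\circ_{X\lalp}}$ of \cite{KKP} to obtain, locally, a rational equivalence $[\bfc_{F\lalp/\cC_{X\lalp}}]\sim[\cC_{F\lalp}]$ inside $h^1$ of the cone $\xi\lalp$ of \eqref{4.24}, and then checks via the explicit computation \eqref{4.30} that these sheaves glue to a sheaf $\Xi$ on $F\times\PP^1$ interpolating between $Ob_X|_F\oplus N_0$ (at $s=0$) and $Ob_F\oplus N_1$ (at $s\neq 0$). The factor $e(N_1)$ then appears because $\cC_F$ lies in the zero section of the $N_1$-summand of $\Xi|_{F\times\{\infty\}}$, not from an excess-intersection computation on the original cone; your step (3) has nothing to apply Fulton's formula to. Note also that even in the perfect case the statement you propose to quote ``verbatim'' from \cite{CKL} is itself proved by this deformation, so citing it does not remove the need to construct the global family in the semi-perfect setting---and that gluing (which works precisely because the moving part of $\Xi$ is expressed only in terms of the globally defined $N_0,N_1,d,x_0,x_1$) is exactly the new point of the proof.

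Two further, smaller problems. First, a proper representative $(f:S\to X,\cV\twoheadrightarrow f^*Ob_X,\tilde B)$ carries no $T$-action, so your asserted splitting $\cV_F=\cV_F^{fix}\oplus\cV_F^{mv}$ compatible with $Ob_X|_F=Ob_F\oplus Ob_X|_F^{mv}$ is unjustified; the paper's Lemma \ref{4.91} avoids any such splitting by identifying $\imath^![X]\virt$ with $0^!_{Ob_X|_F\oplus N_0}[\bfc_{F/\cC_X}]$ through the rational equivalence $[\tilde B_i]=\tilde\pi_i^*[\tilde T_i]$ and the base-change identity $\bfc_{F\times_X\pi^{-1}(T_i)/\pi^{-1}(T_i)}=\bfc_{F\times_XT_i/T_i}\times_F Ob_X|_F$. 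Second, you conflate the restriction of the cone to the fixed locus (your $\widetilde{\cC}\times_{\cV}\cV_F$) with the normal cone $\bfc_{F/\cC_X}$; by \eqref{4.19} it is the latter, not the former, that computes $\imath^!$.
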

\begin{proof}
Since $[X]\virt=0^!_{Ob_X}[\cC_X]$ by definition, we have $\imath^![X]\virt=\imath^!0^!_{Ob_X}[\cC_X].$
Recall that $\cC_X$ is 
the substack of $Ob_X$ whose pullback $$\cC_X|_{X\lalp}\in A_*(Ob_X|_{X\lalp})=A_*(h^1(E\lalp^\vee))$$ to $X\lalp$
is the image of the intrinsic normal cone $\bfc_{X\lalp}\in A_*(h^1/h^0(E\lalp^\vee))$. 
Since $F\subset X\subset \cC_X$, the normal cone $\bfc_{F/\cC_X}\in A_*(Ob_X|_F\oplus N_0)$ is defined as a substack whose restriction
$\bfc_{F/\cC_X}|_{F\lalp}\in A_*(Ob_X|_{F\lalp}\oplus N_0|_{F\lalp})$ to $F\lalp=F\times_XX\lalp$ is the image of $\bfc_{F\lalp/\bfc_{X\lalp}}\in A_*(h^1/h^0(E^\vee)|_{F\lalp}\oplus N_0|_{F\lalp}).$ 

Our proof of Proposition \ref{4.20} will follow from the two lemmas below.

\begin{lemm}\label{4.91}
$\imath^![X]\virt=0^!_{Ob_X|_F\oplus N_0}[\bfc_{F/\cC_X}]$.
\end{lemm}
\begin{proof}
Let $\cC_X=\sum n_iB_i$ where $B_i$ are distinct prime cycles in $Ob_X$. 
Let $(f_i:S_i\to X, \cV_i\to f_i^*Ob_X, \tilde{B}_i)$ be a proper representative of $B_i$ and let $F_{S_i}=F\times_XS_i$ so that we have the commutative diagram
\[\xymatrix{
F\ar[r]^\imath & X & Ob_X\ar[l]_\pi \\
F_{S_i}\ar[u]^{g_i}\ar[r]_{\imath_i} & S_i\ar[u]_{f_i} & f_i^*Ob_X\ar[u]\ar[l]^{\pi_{S_i}}& \cV_i\ar@{->>}[l]
}\]

Let $0^!_{\cV_i}[\tilde B_i]=\tilde{T}_i\in A_*(S_i)$ and $T_i=\frac{1}{\deg f_i}{f_i}_*\tilde{T}_i\in A_*(X)$.
Let $\tilde{\pi}_i:\cV_i\to S_i$ denote the bundle projection. Then $[\tilde B_i]=\tilde{\pi}_i^*[\tilde T_i]\in A_*(\cV_i)$ 
and $$[X]\virt=0^!_{Ob_X}[\cC_X]=\sum_i\frac{n_i}{\deg f_i}{f_i}_*0^!_{\cV_i}[\tilde B_i]=\sum_i n_i T_i.$$ Hence the left side of Lemma \ref{4.91} is 
\beq\label{4.97} \imath^![X]\virt=\imath^! \sum_i n_i T_i=\sum_i n_i 0_{N_0}^![\bfc_{F\times_XT_i/T_i}].\eeq
By the proof of \cite[Proposition 1.9]{Fulton}, the rational equivalence $\tilde B_i\sim {\tilde \pi}^*_i[\tilde T_i]$ is given by admissible rational functions and hence we have 
$[B_i]=[\pi^{-1}(T_i)] \in A_*(Ob_X)$
where $\pi:Ob_X\to X$ is the projection. Since $F\subset X$, we have 
$$\bfc_{F\times_X\pi^{-1}(T_i)/\pi^{-1}(T_i)}=\bfc_{F\times_XT_i/T_i}\times_FOb_X|_F$$
and the right side of Lemma \ref{4.91} is 
\beq\label{4.98} 0^!_{N_0\oplus Ob_X|_F}[\bfc_{F/\cC_X}]=\sum_in_i0^!_{N_0\oplus Ob_X|_F}[\bfc_{F\times_XB_i/B_i}]\eeq
$$ =\sum_in_i0^!_{N_0\oplus Ob_X|_F}[\bfc_{F\times_XT_i/T_i}\times_FOb_X|_F]
=\sum_in_i0^!_{N_0}[\bfc_{F\times_XT_i/T_i}].$$
%
%
The lemma follows from \eqref{4.97} and \eqref{4.98}.
\end{proof}

\begin{lemm}\label{4.92}
There exist a coherent sheaf $\Xi$ on $F\times \PP^1$ such that 
\beq\label{4.28}
\Xi|_{F\times \{s\}} = \left\{  \begin{matrix}
\widetilde{Ob}_F=Ob_F \oplus N_1 & s\ne 0\\ 
Ob_X|_F\oplus N_0 & s=0 .
\end{matrix}\right.
\eeq
and a rational equivalence of cycles 
\[[\bfc_{F/\cC_X}]\sim [\cC_F]\in A_*(\Xi).\]
\end{lemm}
\begin{proof}
Consider the double deformation spaces
\[  M\lalp=M^\circ_{F\lalp\times\PP^1/M^\circ_{X\lalp}}\lra \PP^1\times\PP^1 \]
from \cite{KKP} such that 
\[
M\lalp|_{\{t\}\times\PP^1} =\left\{ \begin{matrix} 
M^\circ_{X\lalp} & t\ne 0\\ 
\bfc_{F\lalp\times\PP^1/M^\circ_{X\lalp}} & t=0
\end{matrix}\right.
\]
where $M^\circ_{X\lalp}\to \PP^1$ is the deformation space such that
\[
M^\circ_{X\lalp}|_s=\left\{ \begin{matrix} 
\mathrm{pt} & s\ne 0\\ 
\bfc_{X\lalp} & s=0 .
\end{matrix}\right.
\]
Here $\bfc_{X\lalp}$ is the intrinsic normal cone of $X\lalp$. Note that 
\[
\bfc_{F\lalp\times\PP^1/M^\circ_{X\lalp}} |_s =
\left\{ \begin{matrix} 
\bfc_{F\lalp/\bfc_{X\lalp}} & s=0\\ 
\bfc_{F\lalp} & s\ne 0 .
\end{matrix}\right.
\]

From \eqref{4.2}, \eqref{4.7} and \eqref{4.13}, we have a morphism
\beq\label{4.23}
\lambda:E\lalp|_{F\lalp}\lra \widetilde{E}\lalp^F
\eeq
whose cone is the locally free sheaf $N_0|_{F\lalp}^\vee$. 
Let $\xi\lalp$ be the cone of the morphism
\beq\label{4.24}
(x_0\cdot\mathrm{id},x_1\cdot \lambda): p_1^*E\lalp|_{F\lalp}\otimes p_2^*\sO_{\PP^1}(-1)\lra p_1^*E\lalp|_{F\lalp}\oplus p_1^*\widetilde{E}^F\lalp
\eeq
where $x_0, x_1$ are the homogeneous coordinates of $\PP^1$ and 
$p_1, p_2$ are the projections from $F\times \PP^1$ to $F$ and $\PP^1$ respectively. 

Since $\bfc_{F\lalp\times\PP^1/M^\circ_{X\lalp}}\subset  h^1/h^0(\xi^\vee\lalp)$, 
we have a rational equivalence
\[ [\bfc_{F\lalp/\bfc_{X\lalp}}]\sim [\bfc_{F\lalp}] \]
in $A_*(h^1/h^0(\xi^\vee\lalp))$ and hence an equivalence
\beq\label{4.26}
[\bfc_{F\lalp/\cC_{X\lalp}}]\sim [\cC_{F\lalp}]
\eeq
in $A_*(h^1(\xi^\vee\lalp))$ where $[\cC_{F\lalp}]$ is the image of $[\bfc_{F\lalp}]$ by the canonical morphism 
$h^1/h^0(\xi^\vee\lalp)\to h^1(\xi^\vee\lalp)$.

Note that $\{\cC_{F\lalp}\}$, $\{\cC_{X\lalp}\}$ and $\{\bfc_{F\lalp/\cC_{X\lalp}}\}$ all glue to 
\[ [\cC_F]\in A_*(Ob_F),\quad [\cC_X]\in A_*(Ob_X),\and [\bfc_{F/\cC_X}] \]
respectively. The canonical equivalences \eqref{4.26} glue to an equivalence
\beq\label{4.29}
[\bfc_{F/\cC_X}]\sim [\cC_F] \in A_*(\Xi). 
\eeq

To prove \eqref{4.28}, we take a more careful look at \eqref{4.24}. To simplify the notation, we will often drop $p_1^*$, $p_2^*$, $\cdot|_{F\lalp}$ when the meaning is clear from the context. For example, $E\lalp|_{F\lalp}(-1)$ means $p_1^*E\lalp|_{F\lalp}\otimes p_2^*\sO_{\PP^1}(-1)$  and $N_1^\vee$ often means $N_1|_{F\lalp}^\vee$. 
By \eqref{4.2}, \eqref{4.7}, \eqref{4.13} and \eqref{4.22}, we have 
\[ p_1^*E\lalp|_{F\lalp}\otimes p_2^*\sO_{\PP^1}(-1)=[N_1^\vee\to N_0^\vee](-1)\oplus E^F\lalp(-1),\]
\[ p_1^*E\lalp|_{F\lalp}\oplus p_1^*\widetilde{E}\lalp^F =[N_1^\vee\to N_0^\vee]\oplus [N_1^\vee\to 0] \oplus E\lalp^F \oplus E^F\lalp  .\]
The cone $\xi\lalp^{fix}$ of the fixed part
\[ E^F\lalp(-1)\mapright{(x_0,x_1)} E^F\lalp\oplus E^F\lalp   \]
is certainly $E^F\lalp(1)$ and $\{h^1(E^F\lalp(1)^\vee)\}$ glue to the sheaf
$ p_1^*Ob_F\otimes p_2^*\sO_{\PP^1}(1)$ on $F\times \PP^1$. 
By direct computation, the cone $\xi^{mv}\lalp$ of the moving part has
\beq\label{4.30}
h^1((\xi^{mv}\lalp)^\vee)=h^1(N_0\mapright{(d,0,-x_0)}N_1\oplus N_1\oplus N_0(1)\mapright{(-x_0,-x_1,-d)}N_1(1))
\eeq
where $d$ is the differential of $N\virt=[N_0\mapright{d}N_1]$. 
Since $N_0, N_1, d, x_0,x_1$ are globally defined on $F\times \PP^1$, 
we find that $\{h^1(\xi\lalp)\}$ glue to a sheaf $\Xi$ on $F\times \PP^1$. 
Finally \eqref{4.28} follows from \eqref{4.30}.
\end{proof}

Now we can complete our proof of Proposition \ref{4.20}. 
By Lemma \ref{4.92}, 
\beq\label{4.93} 0^!_\Xi[\cC_F]=0^!_\Xi[\bfc_{F/\cC_X}].\eeq
Since $\cC_F\subset \Xi|_{F\times\{\infty\}}=Ob_F\oplus N_1$, we have 
\beq\label{4.94}
0^!_\Xi[\cC_F]=0^!_{Ob_F\oplus N_1}[\cC_F]=e(N_1)\cap [F]\virt.\eeq
Likewise, since $\bfc_{F/\cC_X}\subset \Xi|_{F\times\{0\}}=Ob_X|_F\oplus N_0$, by Lemma \ref{4.91}, we have 
\beq\label{4.95}
0^!_\Xi[\bfc_{F/\cC_X}]=0^!_{Ob_X|_F\oplus N_0}[\bfc_{F/\cC_X}]=\imath^![X]\virt.\eeq
The proposition follows from \eqref{4.93}, \eqref{4.94} and \eqref{4.95}.
\end{proof}

Now  we can prove Theorem \ref{4.10}.
\begin{proof}[Proof of Theorem \ref{4.10}]
By \cite[Theorem 6.3.5]{Kres}, the inclusion $\imath:F\to X$ induces an isomorphism
\[
\imath_*:A^T_*(F)\otimes_{\QQ[t]}\QQ[t,t^{-1}]\lra A^T_*(X)\otimes_{\QQ[t]}\QQ[t,t^{-1}] .
\]
Hence we have a class $\gamma\in A_*^T(F)\otimes_{\QQ[t]}\QQ[t,t^{-1}]$ such that 
$\imath_*\gamma=[X]\virt$. 

By \eqref{4.19}, $\imath^![B]=0^!_{N_0}(\bfc_{B\times_XF/B})=0^!_{N_0}[B]=e(N_0) \cap [B]$ if $B\subset F$. 
Hence we have
\[ \imath^![X]\virt =\imath^!\imath_*\gamma=e(N_0) \cap \gamma. \]
By Proposition \ref{4.20}, we obtain
\[ \gamma\cap e(N_0)=e(N_1)\cap [F]\virt .\]
Since $e(N\virt)=e(N_0)/e(N_1)$, 
\[ [X]\virt=\imath_*\gamma =\imath_* \frac{[F]\virt}{e(N\virt)}\]
as desired.
\end{proof}


\bigskip

\section{Torus localization of cosection localized virtual cycles for semi-perfect obstruction theory}\label{sec5}

In this section, we generalize the torus localization formula of cosection localized virtual cycle 
(cf. \cite[\S3]{CKL}) to the setting of semi-perfect obstruction theory.

Let $X$ be a \DM stack equipped with 
\begin{enumerate} 
\item an action of $T=\CC^*$,
\item a semi-perfect obstruction theory
\beq\label{5.1} (X\lalp\to X, \phi\lalp:E\lalp\to \bbL_{X\lalp}) ,\quad \text{and}\eeq
\item  a $T$-invariant cosection 
\beq\label{5.2} \sigma:Ob_X\lra \sO_X\eeq
of the obstruction sheaf $Ob_X$ of the semi-perfect obstruction theory.
\end{enumerate}
Let $X(\sigma)$ denote the zero locus of $\sigma$, that is the closed substack defined by the image of $\sigma$.
By Lemma \ref{4.5}, we have an induced semi-perfect obstruction theory 
\beq\label{5.3} (F\lalp\to F, \phi^F\lalp:E^F\lalp\to \bbL_{F\lalp}) \eeq
on $F$. Since $\sigma$ is $T$-invariant, the restriction $\sigma|_F$ of $\sigma$ to $F$ factors through
a homomorphism 
\beq\label{5.4} \sigma_F:Ob_F\lra \sO_F \eeq
whose zero locus is $F(\sigma)=F\times_XX(\sigma)$. 

By Theorem \ref{3.0}, we have cosection localized virtual fundamental classes
\beq\label{5.5}
[X]\virt_\loc\in A_*(X(\sigma)),\quad [F]\virt_\loc\in A_*(F(\sigma)).
\eeq
The goal of this section is to prove the following.
\begin{theo}\label{5.6}
Let $\imath:F(\sigma)\to X(\sigma)$ denote the inclusion. Suppose Assumption \ref{4.8} holds. Then we have
\[
[X]\virt_\loc=\imath_*\frac{[F]\virt_\loc}{e(N\virt)} \in A^T_*(X(\sigma))\otimes_{\QQ[t]}\QQ[t,t^{-1}].
\]
\end{theo}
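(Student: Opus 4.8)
The plan is to combine the cosection‑localized Gysin map of Proposition \ref{3.9} with the geometric deformation argument already developed for Theorem \ref{4.10}, running every step through the cosection. First I would observe that all the auxiliary data from \S\ref{sec4} is compatible with $\sigma$: since $\sigma$ is $T$‑invariant it restricts to $\sigma_F:Ob_F\to\sO_F$ as in \eqref{5.4}, and because $N\virt$ is the moving part while $\sigma$ kills the moving part (it factors through $Ob_X|_F^{fix}=Ob_F$), the enlarged obstruction sheaf $\widetilde{Ob}_F=Ob_F\oplus N_1$ carries the cosection $\tilde\sigma_F=(\sigma_F,0)$, and the sheaf $\Xi$ on $F\times\PP^1$ of Lemma \ref{4.92} carries a cosection $\sigma_\Xi$ restricting to $(\sigma_F,0)$ on $\Xi|_{F\times\{\infty\}}$ and to $\imath^*\sigma|_F\oplus 0$ on $\Xi|_{F\times\{0\}}=Ob_X|_F\oplus N_0$. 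Concretely, $\sigma_\Xi$ is the moving‑fixed decomposition applied to $\sigma$: the fixed summand $p_1^*Ob_F(1)$ is handled by $\sigma_F$ and the moving summand \eqref{4.30} is killed, exactly as in \cite[\S3]{CKL}.

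Next I would check that the cycles $[\cC_X]$, $[\cC_F]$, $[\bfc_{F/\cC_X}]$ all lie in the appropriate cosection cones. By \eqref{3.8} we already have $[\cC_X]\in A_*(Ob_X(\sigma))$, and the same argument restricted to $F$ gives $[\cC_F]\in A_*(Ob_F(\sigma_F))$; the normal‑cone construction $\bfc_{F/\cC_X}$ inherits support in $\Xi(\sigma_\Xi)$ because the cone degeneration is built fibrewise from the $E\lalp$ and the cosection is built from $\sigma$ compatibly. Then I would establish the cosection‑localized analogues of Lemmas \ref{4.91} and \ref{4.92}: that $\imath^!_\loc[X]\virt_\loc=0^!_{(Ob_X|_F\oplus N_0),\,\imath^*\sigma|_F}[\bfc_{F/\cC_X}]$, where $\imath^!_\loc$ is the $\sigma$‑localized refined Gysin map for the regular embedding $F\hookrightarrow X$ (its construction is identical to \eqref{4.18}–\eqref{4.19} with $0^!_{N_0}$ unchanged, since $N_0$ is a genuine bundle and the cosection lives on the $Ob$ factor), and the rational equivalence $[\bfc_{F/\cC_X}]\sim[\cC_F]$ in $A_*(\Xi(\sigma_\Xi))$. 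The first is a formal consequence of \eqref{3.93} — the localized Gysin map is "$0^!$ of a proper representative with $0^!_\cV$ replaced by $0^!_{\cV,\sigma}$" — combined with the commutation identity \cite[Proposition 1.9]{Fulton}‑style manipulation from Lemma \ref{4.91}, now carried out in the localized Chow groups using that $0^!_{\cV,\sigma}$ preserves rational equivalence. The second requires noting that the rational equivalences \eqref{4.26} produced by the double deformation space $M\lalp$ are realized by admissible rational functions on prime cycles that already lie in the $\sigma_\Xi$‑cone, so they descend to rational equivalences in $A_*(\Xi(\sigma_\Xi))$.

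From there the proof parallels that of Proposition \ref{4.20} and Theorem \ref{4.10}: applying $0^!_{\Xi,\sigma_\Xi}$ to the rational equivalence $[\bfc_{F/\cC_X}]\sim[\cC_F]$ in $A_*(\Xi(\sigma_\Xi))$, restricting to $s=\infty$ gives $0^!_{(Ob_F\oplus N_1),\,\tilde\sigma_F}[\cC_F]=e(N_1)\cap[F]\virt_\loc$ (here the Euler class passes through the localized Gysin map because $N_1$ is a bundle summand disjoint from the cosection, cf. \cite[\S3]{CKL}), while restricting to $s=0$ gives $\imath^!_\loc[X]\virt_\loc$. Hence $\imath^!_\loc[X]\virt_\loc=e(N_1)\cap[F]\virt_\loc$. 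Finally, pushing to $X(\sigma)$ and using Kresch's isomorphism $\imath_*:A^T_*(F(\sigma))\otimes\QQ[t,t^{-1}]\to A^T_*(X(\sigma))\otimes\QQ[t,t^{-1}]$ together with $\imath^!_\loc\imath_*=e(N_0)\cap(-)$ on $F(\sigma)$, we write $[X]\virt_\loc=\imath_*\gamma$, deduce $\gamma\cap e(N_0)=e(N_1)\cap[F]\virt_\loc$, and conclude $[X]\virt_\loc=\imath_*\bigl([F]\virt_\loc/e(N\virt)\bigr)$ since $e(N\virt)=e(N_0)/e(N_1)$.

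The main obstacle I anticipate is the bookkeeping around the refined cosection‑localized Gysin map for the embedding $F\hookrightarrow X$ and its compatibility with $0^!_{\cV,\sigma}$ under proper pushforward — i.e. verifying that $\imath^!_\loc$ is well defined on $A_*(Ob_X(\sigma))$ via proper representatives and commutes with the operations in \eqref{3.91}, so that the identities \eqref{4.97}–\eqref{4.98} go through verbatim in the localized setting. This is where one must be careful that the blow‑up $S^\#$ used to define $0^!_{\cV,\sigma}$ interacts correctly with the further cone degeneration over $\PP^1$; but since both constructions are étale‑local on $X$ and the cosection everywhere comes from the single global $\sigma$, the verification is a (somewhat lengthy) diagram chase of exactly the type carried out in \cite[\S2]{KLc}, \cite[\S3]{CLs} and \cite[\S3]{CKL}, and I would indicate it rather than write it in full.
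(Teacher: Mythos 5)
Your proposal is correct and follows essentially the same route as the paper: the paper proves Lemma \ref{5.7} by noting that the rational equivalence of Lemma \ref{4.92} lies in $\Xi(\sigma)$ (as in \cite[Lemma 2.5]{CKL}) so that the argument of Proposition \ref{4.20} goes through with the localized Gysin maps of Proposition \ref{3.9}, and then reruns the proof of Theorem \ref{4.10} with $F$, $X$, $[F]\virt$, $[X]\virt$ replaced by $F(\sigma)$, $X(\sigma)$, $[F]\virt_\loc$, $[X]\virt_\loc$. Your write-up simply makes explicit the cosections placed on $\widetilde{Ob}_F$, $Ob_X|_F\oplus N_0$ and $\Xi$ and the localized analogues of Lemmas \ref{4.91}--\ref{4.92}, which is exactly what the paper's terse proof implicitly relies on.
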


As in \S\ref{sec4}, Theorem \ref{5.6} follows from the following lemma. 
\begin{lemm}\label{5.7}
$\imath^![X]\virt_\loc= e(N_1)\cap [F]\virt_\loc.$
\end{lemm}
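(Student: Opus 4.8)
The plan is to mimic the strategy of Section~\ref{sec4}, where Theorem~\ref{4.10} was deduced from Proposition~\ref{4.20}, but now carrying the cosection along at every stage. First I would set up the cosection-compatible pullback map. Since $\sigma$ is $T$-invariant, its restriction factors as $\sigma_F:Ob_F\to\sO_F$ with zero locus $F(\sigma)=F\times_XX(\sigma)$, and the enlarged obstruction sheaf $\widetilde{Ob}_F=Ob_F\oplus N_1$ inherits a cosection $\widetilde\sigma_F=\sigma_F\oplus 0$ with the same zero locus $F(\sigma)$. Likewise $Ob_X|_F\oplus N_0$ carries the cosection $\sigma|_F\oplus 0$, again with zero locus $F(\sigma)$. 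I would then define a refined pullback
\[
\imath^!_\loc:A_*(X(\sigma))\lra A_*(F(\sigma)),\qquad \imath^!_\loc[B]=0^!_{N_0,\mathrm{loc}}[\bfc_{B\times_{X}F/B}]
\]
using that $N_0\to F$ restricted over $F(\sigma)$ needs no cosection twist (the cosection on $N_0\oplus Ob_X|_F$ is zero on the $N_0$-factor), so $0^!_{N_0,\mathrm{loc}}=e(N_0)\cap(-)$ on cycles supported over $F(\sigma)$; the point is that $\bfc_{B\times_XF/B}$ lands in $N_0$ over $F(\sigma)$ whenever $B$ represents a class in $A_*(X(\sigma))$. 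One checks $\imath_*\circ\imath^!_\loc = \imath^!\circ\imath_*$ via the compatibility \eqref{3.12}, so no information is lost by working locally.

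Next I would prove the cosection-localized analogue of Proposition~\ref{4.20}, namely Lemma~\ref{5.7}: $\imath^!_\loc[X]\virt_\loc = e(N_1)\cap[F]\virt_\loc$. This follows the same two-lemma architecture. The analogue of Lemma~\ref{4.91} states $\imath^!_\loc[X]\virt_\loc = 0^!_{Ob_X|_F\oplus N_0,\,\sigma|_F\oplus 0}[\bfc_{F/\cC_X}]$; its proof is identical to that of Lemma~\ref{4.91} once one observes that $[\cC_X]\in A_*(Ob_X(\sigma))$ by \eqref{3.8}, so in the decomposition $\cC_X=\sum n_iB_i$ every prime cycle $B_i$ lies in $Z_*(Ob_X(\sigma))$, hence the cones $\bfc_{F\times_XB_i/B_i}$ and their deformations to $\bfc_{F\times_XT_i/T_i}\times_F Ob_X|_F$ stay inside the locus killed by the cosection, and every Gysin map $0^!_{\cV_i}$ may be replaced by $0^!_{\cV_i,\mathrm{loc}}$ using Proposition~\ref{3.9}. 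The analogue of Lemma~\ref{4.92} is the heart of the argument: the sheaf $\Xi$ on $F\times\PP^1$ of \eqref{4.28} carries a $T$-invariant cosection $\sigma_\Xi$ which restricts to $\widetilde\sigma_F$ on $F\times\{\infty\}$ and to $\sigma|_F\oplus 0$ on $F\times\{0\}$, and the rational equivalence $[\bfc_{F/\cC_X}]\sim[\cC_F]$ takes place inside $A_*(\Xi(\sigma_\Xi))$. Then, exactly as in \eqref{4.93}--\eqref{4.95},
\[
0^!_{\Xi,\sigma_\Xi}[\cC_F]=0^!_{\widetilde{Ob}_F,\widetilde\sigma_F}[\cC_F]=e(N_1)\cap[F]\virt_\loc,\qquad 0^!_{\Xi,\sigma_\Xi}[\bfc_{F/\cC_X}]=\imath^!_\loc[X]\virt_\loc,
\]
and these agree since the two cycles are rationally equivalent in $\Xi(\sigma_\Xi)$.

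The main obstacle, as I see it, is constructing the cosection on $\Xi$ and checking it is compatible with the gluing in \eqref{4.30}. Recall $\Xi$ is glued from $h^1((\xi^{mv}\lalp)^\vee)$, and on the central fibre its $h^1$ is $Ob_X|_F\oplus N_0$ while on the other fibres it is $Ob_F\oplus N_1$. The cosection $\sigma$ on $Ob_X$ restricts to $Ob_X|_F$, and since $\sigma$ is $T$-invariant it kills the moving part $h^1(N\virt)=Ob_X|_F^{mv}$, so it descends to $Ob_F=Ob_X|_F^{fix}$; I must show these two pieces, extended by $0$ on the $N_0$- resp.\ $N_1$-summands, fit into a single homomorphism $\sigma_\Xi:\Xi\to\sO_{F\times\PP^1}$. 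Because $N_0,N_1,d,x_0,x_1$ are globally defined on $F\times\PP^1$, the complex $\xi^{mv}\lalp$ of \eqref{4.30} is built from global data, and the cosection on $h^1((\xi^{mv}\lalp)^\vee)$ is $(\sigma^{mv}\lalp,0,0)$ where $\sigma^{mv}\lalp$ is the restriction of $\sigma$ to $Ob_{X\lalp}|_{F\lalp}^{mv}$—which vanishes by $T$-invariance—and so in fact the cosection on the moving part is simply $0$, while on the fixed part it is the globally-defined $\sigma_F\otimes\mathrm{id}$. Thus $\sigma_\Xi$ exists and $\Xi(\sigma_\Xi)|_{F\times\{s\}}$ equals $(Ob_F\oplus N_1)(\sigma_F\oplus 0)$ for $s\neq 0$ and $(Ob_X|_F\oplus N_0)(\sigma|_F\oplus 0)$ for $s=0$. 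Granting this, the deduction of Theorem~\ref{5.6} from Lemma~\ref{5.7} proceeds verbatim as in the proof of Theorem~\ref{4.10}: apply Kresch's localization isomorphism to write $[X]\virt_\loc=\imath_*\gamma$ for a unique $\gamma\in A^T_*(F(\sigma))\otimes_{\QQ[t]}\QQ[t,t^{-1}]$, then $\imath^!_\loc\imath_*\gamma=e(N_0)\cap\gamma$ by the computation of $\imath^!_\loc$ on cycles over $F(\sigma)$, combine with Lemma~\ref{5.7} to get $e(N_0)\cap\gamma=e(N_1)\cap[F]\virt_\loc$, and divide by $e(N_1)$.
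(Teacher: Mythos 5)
Your route is the paper's own: rerun the proof of Proposition \ref{4.20} with every Gysin map replaced by its cosection-localized version, and then deduce Theorem \ref{5.6} verbatim as Theorem \ref{4.10} was deduced. The pieces you spell out (the factorization $\sigma_F:Ob_F\to\sO_F$ through the fixed part, the cosections $\sigma_F\oplus 0$ on $Ob_F\oplus N_1$ and $\sigma|_F\oplus 0$ on $Ob_X|_F\oplus N_0$, the localized analogue of Lemma \ref{4.91} using $[\cC_X]\in A_*(Ob_X(\sigma))$ from \eqref{3.8}) all match what the paper intends. But there is a genuine gap at the one step that carries the real content. You assert that ``the rational equivalence $[\bfc_{F/\cC_X}]\sim[\cC_F]$ takes place inside $A_*(\Xi(\sigma_\Xi))$'' and then identify the \emph{construction} of $\sigma_\Xi$ as the main obstacle. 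Constructing $\sigma_\Xi$ is the easy part: as you yourself observe, $T$-invariance kills the moving summands, so $\sigma_\Xi$ is just $\sigma_F$ on the globally defined fixed factor extended by zero. What actually needs proof is the support statement: the cycle realizing the rational equivalence of Lemma \ref{4.92} --- the image of the double deformation cone $\bfc_{F\lalp\times\PP^1/M^\circ_{X\lalp}}$ --- must be shown to lie in $\Xi(\sigma_\Xi)=\Xi|_{F(\sigma)\times\PP^1}\cup\ker(\sigma_\Xi)$, i.e.\ over points of $F\setminus F(\sigma)$ the deformation cone must land in the kernel of the lifted cosection. This is a cone-reduction statement, the family analogue of \cite[Proposition 4.3]{KLc}, and it does not follow from the existence of $\sigma_\Xi$ or from knowing its restrictions to $F\times\{0\}$ and $F\times\{\infty\}$; it is exactly the point the paper settles by invoking \cite[Lemma 2.5]{CKL}.

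Without that argument (or at least a citation to it), the central chain $0^!_{\Xi,\sigma_\Xi}[\cC_F]=0^!_{\Xi,\sigma_\Xi}[\bfc_{F/\cC_X}]$ is unjustified: the localized Gysin map of Proposition \ref{3.9} only respects rational equivalences whose supporting prime cycle and boundary lie in the degeneracy locus, so membership of the whole family cycle in $\Xi(\sigma_\Xi)$ is precisely what licenses the comparison of the two end fibers. Supplying the cone-reduction argument for the double deformation space (or explicitly reducing it to \cite[Lemma 2.5]{CKL}) would close the gap; the remainder of your write-up then goes through as in the paper.
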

\begin{proof}
The proof is identical to that of Proposition \ref{4.20}, once we show that the rational equivalence in Lemma \ref{4.92} lies in 
$$\Xi(\sigma)=\Xi|_{F(\sigma)\times \PP^1}\cup \ker\{\sigma:\Xi\to \sO\}.$$
Its proof is identical to that of \cite[Lemma 2.5]{CKL}. 
\end{proof}

\begin{proof}[Proof of Theorem \ref{5.6}]
The proof of Theorem \ref{4.10} also proves Theorem \ref{5.6} if we replace $F$, $X$, $[F]\virt$ and $[X]\virt$ by $F(\sigma)$, $X(\sigma)$, $[F]\virt_\loc$ and $[X]\virt_\loc$ respectively. 
\end{proof}

\bigskip

\section{Dual obstruction cone and virtual signed Euler characteristic}\label{sec6}

In this section,  we recall the dual obstruction cone $N$ of a \DM stack $X$ with a perfect obstruction theory by Jiang-Thomas \cite{JiTh} and prove that it admits a semi-perfect obstruction theory (Theorem \ref{6.7}). 
The cosection localization (Theorem \ref{3.0}) and the torus localization (Theorem \ref{4.10}) above for semi-perfect obstruction theory now establish the Jiang-Thomas theory of signed Euler characteristic (cf. \cite{JiTh}) without any assumption on the derived geometry of $X$. 

\subsection{Dual obstruction cone}\label{S6.1}
In this subsection, we recall the dual obstruction cone by Jiang-Thomas \cite{JiTh}. Let $X$ be a \DM stack
equipped with a perfect obstruction theory
\beq\label{6.0}
E\lra \bbL_X
\eeq
whose obstruction sheaf is $Ob_X=h^1(E^\vee)$. 

\begin{defi}
The \emph{dual obstruction cone} of $X$ is defined as
\beq\label{6.1} 
N=\Spec_X(\mathrm{Sym} \cF)\mapright{\pi} X,\quad \cF=Ob_X
\eeq
which represents the functor
\[ \text{(Schemes over X)}\lra \text{(Sets)},\quad (S\mapright{f} X)\mapsto Hom_S(f^*\cF,\sO_S) \]
i.e. giving a morphism $S\to N$ over $X$ amounts to giving a morphism $S\mapright{f} X$ together with a homomorphism $f^*\cF\to \sO_S$.
\end{defi}
Finding a cosection $\sigma:\cF\to \sO_X$ of $\cF$ is the same as finding a section $s:X\to N$ of \eqref{6.1}.

\medskip

By a standard argument on perfect obstruction theory, there is an \'etale cover $\{X\lalp\to X\}$ of $X$ such that $X\lalp$ is the zero locus of a section $s\lalp$ of a trivial vector bundle $V\lalp$ on a smooth variety $A\lalp$ 
\beq\label{6.2}\xymatrix{
& V\lalp\ar[d]\\
X\lalp=\mathrm{zero}(s\lalp)\ar@{^(->}[r] & A\lalp
}\eeq
and that the perfect obstruction theory $E$ restricted to $X\lalp$ is 
\beq\label{6.3}
E|_{X\lalp}\cong [V\lalp^\vee|_{X\lalp}\mapright{(ds\lalp)^\vee} \Omega_{A\lalp}|_{X\lalp}] .
\eeq
Since the obstruction sheaf $Ob_X|_{X\lalp}$ of the perfect obstruction theory $E|_{X\lalp}$ is 
$\mathrm{coker}(ds\lalp)$, $N\lalp:=N|_{X\lalp}$ is the subscheme of $V\lalp^\vee|_{X\lalp}$
defined by the vanishing of the section 
$$(ds\lalp)^\vee: V\lalp^\vee|_{X\lalp}\lra \pi\lalp^*\Omega_{A\lalp}|_{X\lalp}$$
where $\pi\lalp:V\lalp|_{X\lalp}^\vee \to X\lalp$ is the projection. 

If we let $x_1,\cdots,x_n$ be coordinates for $A\lalp$ and $y_1,\cdots, y_r$ be coordinates of the fiber of $V\lalp^\vee\to A\lalp$, $N\lalp=N|_{X\lalp}$ is defined by the ideal generated by 
$$\{s_i\}_{1\le i\le n},\quad \{\sum_iy_i\frac{\partial s_i}{\partial x_j}\}_{1\le j\le r}$$
where $s_i=s_i(x_1,\cdots,x_n)$ are the coordinate functions of $s\lalp$. Let
\beq\label{6.4}
\tilde{s}\lalp:V\lalp^\vee\mapright{s\lalp} \CC
\eeq
be the function on the dual bundle $V\lalp^\vee$ of $V\lalp$, defined by the section $s\lalp$ of $V\lalp$ over $A\lalp$:
\[ \tilde{s}\lalp=\sum_{i=1}^r y_i s_i . \]
Hence the critical locus of $\tilde{s}\lalp$ is defined by the vanishing of 
\beq\label{6.9}
d\tilde{s}\lalp=\sum_i s_i dy_i +\sum_j (\sum_i y_i\frac{\partial s_i}{\partial x_j})dx_j
\eeq
and so $N\lalp$ is the critical locus of the function $\tilde{s}\lalp$ defined on the smooth variety $V\lalp^\vee$, i.e.
\beq\label{6.5} N\lalp=N|_{X\lalp} =\mathrm{Crit}(\tilde{s}\lalp) .\eeq
In particular, $N\lalp$ is equipped with a symmetric perfect obstruction theory
\beq\label{6.6}
F\lalp:=[T_{V\lalp^\vee}|_{N\lalp}\mapright{d(d\tilde{s}\lalp)} \Omega_{V\lalp^\vee}|_{N\lalp}].
\eeq

\medskip

\subsection{Semi-perfect obstruction theory of $N$}\label{S6.2}
In this subsection, we prove that the dual obstruction cone $N$ in \eqref{6.1} has a symmetric semi-perfect obstruction theory defined by \eqref{6.6}.

\begin{theo}\label{6.7}
The dual obstruction cone $N$ has a symmetric semi-perfect obstruction theory
\beq\label{6.8}
\{N\lalp\to N \}, \quad \{F\lalp\to \bbL_{N\lalp}\}
\eeq
where $F\lalp$ is defined by \eqref{6.6} and $N\lalp=N|_{X\lalp}=N\times_XX\lalp$. 
\end{theo}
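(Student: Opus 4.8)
The plan is to verify the two defining conditions of a semi-perfect obstruction theory (Definition \ref{2.10}) for the local symmetric perfect obstruction theories $\{F\lalp\to \bbL_{N\lalp}\}$ built from the critical charts $N\lalp=\mathrm{Crit}(\tilde s\lalp)\subset V\lalp^\vee$. Each $F\lalp$ is a genuine perfect obstruction theory on $N\lalp$ because $N\lalp$ is a critical locus inside the smooth variety $V\lalp^\vee$, and it is symmetric by construction \eqref{6.6}; so the content is entirely in the compatibility of the obstruction sheaves and of the obstruction assignments over the overlaps $N_{\albe}=N\lalp\times_N N_\beta$.

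First I would identify the local obstruction sheaf. From \eqref{6.6} we have $Ob_{N\lalp}=h^1(F\lalp{}^\vee)=\mathrm{coker}\bigl(T_{V\lalp^\vee}|_{N\lalp}\to \Omega_{V\lalp^\vee}|_{N\lalp}\bigr)=\Omega_{V\lalp^\vee}|_{N\lalp}/\mathrm{im}\,d(d\tilde s\lalp)$, which by symmetry also equals $\Omega_{N\lalp}$, the restriction of the cotangent sheaf of the ambient $V\lalp^\vee$ modulo the Hessian image, i.e.\ the ordinary sheaf of differentials $\Omega_{N\lalp}$ of the scheme $N\lalp$. (Concretely, using the coordinates $x_1,\dots,x_n,y_1,\dots,y_r$ and \eqref{6.9}, the obstruction sheaf is the cokernel of the Hessian of $\tilde s\lalp$, and this is intrinsically $\Omega_{N\lalp}$.) Since $\Omega_{N\lalp}$ is a canonically defined sheaf on $N\lalp$, the sheaves $\{\Omega_{N\lalp}\}=\{Ob_{N\lalp}\}$ glue tautologically, via the canonical isomorphisms $\psi_{\albe}:\Omega_{N\lalp}|_{N_\albe}\xrightarrow{\ \cong\ }\Omega_{N_\beta}|_{N_\albe}$, to the sheaf $Ob_N:=\Omega_N$ of differentials of $N$ over $\CC$. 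This handles condition (1).

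Next I would check condition (2), that $F\lalp|_{N_\albe}$ and $F_\beta|_{N_\albe}$ give the same obstruction assignment via $\psi_{\albe}$. Here I would invoke the standard fact (used implicitly in \cite{CLs}, \cite{BeFa}) that for a critical-locus perfect obstruction theory of the form $[T_M|_Z\to\Omega_M|_Z]$ coming from a regular function on a smooth $M$, the obstruction class attached to an infinitesimal lifting problem $(\bar g,B,\bar B)$ is the intrinsic one: it depends only on $\bar g$ and the square-zero extension, being the image of the canonical obstruction-to-lifting class in $\mathrm{Ext}^1(\bar g^*\bbL_{N\lalp},I)$ under $h^1$ of the dual of $\phi\lalp$, and this intrinsic obstruction to lifting a map to the scheme $N$ does not see the chart. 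More precisely, because both $\phi\lalp$ and $\phi_\beta$ have $h^0$ an isomorphism and $h^{-1}$ surjective with target $\bbL_{N_\albe}$, and because the obstruction assignment is computed by composing $\bar g^*E\to\bar g^*\bbL_{N}\to\bbL_{\bar\Delta}\to\bbL_{\bar\Delta/\Delta}=I[1]$, the class only depends on $\bbL_{N_\albe}$ and the natural identification $h^1(E\lalp{}^\vee)=\Omega_{N_\albe}=h^1(E_\beta{}^\vee)$, which is exactly $\psi_{\albe}$. Thus the two obstruction assignments agree. Having verified (1) and (2), Definition \ref{2.10} is satisfied, and since each $F\lalp$ is symmetric the resulting semi-perfect obstruction theory on $N$ is symmetric, proving the theorem.

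I expect the main obstacle to be condition (2): one must argue carefully that the obstruction class of the critical chart $F\lalp$ agrees with the \emph{intrinsic} obstruction to extending a map to $N$ (not merely to $N\lalp$ as an ambient-cut-out scheme), so that different charts produce literally the same element of $I\otimes_\CC\Omega_N|_x$. The cleanest route is to reduce this to the functoriality of obstruction theories under the map $\bbL_{N_\albe}\to\bbL_{N\lalp}|_{N_\albe}$ induced by the open immersion $N_\albe\hookrightarrow N\lalp$ (which is an isomorphism since $N_\albe$ is open in $N\lalp$), combined with the already-established canonical identification of the obstruction sheaves; everything else — that $N\lalp=\mathrm{Crit}(\tilde s\lalp)$ via \eqref{6.5}, that $F\lalp$ in \eqref{6.6} is a symmetric perfect obstruction theory, and that the $N\lalp$ form an \'etale cover of $N$ — is immediate from \S\ref{S6.1} and does not need to be re-proved.
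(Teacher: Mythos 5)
Your verification of condition (1) of Definition \ref{2.10} is fine and matches the paper: each $F_\alpha$ in \eqref{6.6} is a symmetric perfect obstruction theory on the critical chart $N_\alpha=\mathrm{Crit}(\tilde s_\alpha)$, its obstruction sheaf is canonically $\Omega_{N_\alpha}=\Omega_N|_{N_\alpha}$, and these glue tautologically to $\Omega_N$. The gap is in condition (2), which is the real content of the theorem. You assert that, because the obstruction class is the composite $\bar g^*F_\alpha\to\bar g^*\bbL_{N_{\alpha\beta}}\to\bbL_{\bar\Delta}\to\bbL_{\bar\Delta/\Delta}=I[1]$, it ``only depends on $\bbL_{N_{\alpha\beta}}$ and the identification $h^1(F_\alpha^\vee)\cong\Omega_{N_{\alpha\beta}}\cong h^1(F_\beta^\vee)$.'' That is not a formal consequence: $ob(\phi_\alpha,\bar g,B,\bar B)$ is the image of the intrinsic class in $\mathrm{Ext}^1(\bar g^*\bbL_{N_{\alpha\beta}},I)$ under precomposition with $\bar g^*\phi_\alpha$, whereas the identification of $h^1(F_\alpha^\vee)$ with $\Omega_{N_{\alpha\beta}}$ is \emph{not} induced by $\phi_\alpha$ (it comes from the symmetric/critical-chart structure, i.e.\ from $h^0$, not $h^1$). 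There is in general no morphism $F_\alpha|_{N_{\alpha\beta}}\to F_\beta|_{N_{\alpha\beta}}$ over $\bbL_{N_{\alpha\beta}}$ inducing $\psi_{\alpha\beta}$, so nothing forces the two composites $\mathrm{Ext}^1(\bar g^*\bbL_{N_{\alpha\beta}},I)\to I\otimes\Omega_{N_{\alpha\beta}}|_x$ to agree. If your reasoning were valid as stated, condition (2) of Definition \ref{2.10} would be automatic whenever the obstruction sheaves glue canonically, i.e.\ it would be redundant --- but it is precisely the condition that can fail for an arbitrary collection of local (even symmetric) obstruction theories, which is why \cite{KLp} imposes compatibility of critical charts and why the paper raises Question \ref{6.21}. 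Your proposed remedy, functoriality under the open immersion $N_{\alpha\beta}\hookrightarrow N_\alpha$, only compares a theory with its own restriction; it never compares the two different theories $F_\alpha$ and $F_\beta$ living on the same open set.

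What the paper actually does at this point is not formal and uses the global perfect obstruction theory $E\to\bbL_X$ of $X$ in an essential way. One extends the lifting problem to the smooth ambient spaces $V_\alpha^\vee$ and $V_\beta^\vee$ and computes $ob_\alpha=\rho_\alpha(d\tilde s_\alpha\circ g_\alpha|_x)$ by \cite[Lemma 1.28]{KLp}; then the explicit shape \eqref{6.9} of $d\tilde s_\alpha$, together with the diagram \eqref{6.17} relating $F_\alpha$ to $\pi^*$ of the Kuranishi presentation of $X_\alpha$, splits the class as $ob_\alpha=\hat{ob}_\alpha+\bar{ob}_\alpha$ with $\hat{ob}_\alpha\in I\otimes\pi^*Ob_{X_{\alpha\beta}}|_x$ and $\bar{ob}_\alpha\in I\otimes\pi^*\Omega_{X_{\alpha\beta}}|_x$. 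The pieces $\hat{ob}_\alpha=\hat{ob}_\beta$ because they are obstruction classes of the single global theory $E$ on $X$, and $\bar{ob}_\alpha=\bar{ob}_\beta$ because the isomorphisms \eqref{6.3} identifying both presentations with $E|_{X_{\alpha\beta}}$ produce the commutative diagram \eqref{6.20}. This global input --- that all charts $N_\alpha$ arise from local presentations of one and the same perfect obstruction theory $E$ on $X$ --- is exactly what makes the obstruction assignments match, and it is absent from your argument. To repair your proof you would need to supply this comparison (or an equivalent one), not appeal to an abstract independence of the obstruction class from the chart.
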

 
\begin{proof}
We use the notation and local description of \S\ref{S6.1}. Since \eqref{6.6} is symmetric, the obstruction sheaves
\beq\label{6.10}
Ob_{N\lalp}=h^1(F\lalp^\vee)\cong \Omega_{N\lalp}=\Omega_N|_{N\lalp}
\eeq
glue to the cotangent sheaf $\Omega_N$. We have to show that the perfect obstruction theories
$F\lalp$ and $F_\beta$ on 
\beq\label{6.11}
N_\albe:=N\lalp\times_NN_\beta=N\times_XX_\albe
\eeq
give the same obstruction assignment (cf. \S\ref{sec2}). 

Let $x\in N_\albe$ be a closed point and let $B$ be an Artin local ring with maximal ideal $m_B$. 
Let $I$ be an ideal of $B$ satisfying $I\cdot m_B=0$. Let $\bar B=B/I$. Let $\bar g: \Spec\bar B\to N_\albe$ be a morphism that sends the closed point of $\Spec \bar B$ to $x$. We then have a diagram
\beq\label{6.12}\xymatrix{
\Spec \bar B\ar[r]^{\bar g}\ar@{^(->}[d] & N_\albe \ar[r]\ar[dr] & N\lalp\ar@{^(->}[r] & V\lalp^\vee\\
\Spec B && N_\beta \ar@{^(->}[r] & V_\beta^\vee
}\eeq
Since $V\lalp^\vee$ and $V^\vee_\beta$ are smooth, there are morphisms
\beq\label{6.13}
V^\vee\lalp\mapleft{g\lalp} \Spec B\mapright{g_\beta} V_\beta^\vee
\eeq
that extend \eqref{6.12}. The functions $\tilde{s}\lalp$ and $\tilde{s}_\beta$ in \eqref{6.4} give us sections
\beq\label{6.14}
d\tilde{s}\lalp:V\lalp^\vee\lra \Omega_{V\lalp^\vee},\quad d\tilde{s}_\beta:V_\beta^\vee\lra \Omega_{V_\beta^\vee}.
\eeq
Composing \eqref{6.13} and \eqref{6.14} with the canonical 
\beq\label{6.15}
\rho\lalp:I\otimes_\CC\Omega_{V\lalp^\vee}|_x\to I\otimes_\CC\Omega_{N\lalp}|_x, \quad
\rho_\beta: I\otimes_\CC\Omega_{V_\beta^\vee}|_x\to I\otimes_\CC \Omega_{N_\beta}|_x,
\eeq
we obtain the obstruction classes
\beq\label{6.16}
ob\lalp=\rho\lalp ( d\tilde{s}\lalp\circ g\lalp|_x)\in I\otimes_\CC\Omega_{N_\albe}|_x, \eeq
\[ob_\beta=\rho_\beta ( d\tilde{s}_\beta\circ g_\beta|_x)\in I\otimes_\CC\Omega_{N_\albe}|_x
\]
by \cite[Lemma 1.28]{KLp}. The theorem follows if we show that $ob\lalp=ob_\beta$.

Note that the perfect obstruction theory \eqref{6.6} fits into the commutative diagram of exact sequences
\beq\label{6.17}\xymatrix{
& 0\ar[d] &0\ar[d]&0\ar[d]&.\ar[d]\\
0\ar[r] & \pi^*Ob_{X\lalp}^\vee \ar[r]\ar[d] & [\pi^*V\lalp^\vee|_{X\lalp}\ar[r]^{(ds\lalp)^\vee} \ar[d]
& \pi^*\Omega_{A\lalp}|_{X\lalp}]\ar[d]\ar[r] & \pi^*\Omega_{X\lalp}\ar[d]\ar[r] & 0\\
0\ar[r] & T_{N\lalp} \ar[r]\ar[d] & [T_{V\lalp^\vee}|_{N\lalp}\ar[r]^{d(d\tilde{s}\lalp)} \ar[d]
& \Omega_{V\lalp^\vee}|_{N\lalp}]\ar[d]\ar[r] & \Omega_{N\lalp}\ar[d]\ar[r] & 0\\
0\ar[r] & \pi^*T_{X\lalp} \ar[r]\ar[d] & [\pi^*T_{A\lalp}|_{X\lalp}\ar[r]^{ds\lalp} \ar[d]
& \pi^*V\lalp|_{X\lalp}]\ar[d]\ar[r] & \pi^*Ob_{X\lalp}\ar[d]\ar[r] & 0\\
&.&0&0&0
}\eeq
where $\pi:N\lalp\to X\lalp$ is the projection. 
Hence we may write $$ob_\alpha=\hat{ob}\lalp+\bar{ob}\lalp \and ob_\beta=\hat{ob}_\beta+\bar{ob}_\beta,$$
$$\hat{ob}_\alpha, \hat{ob}_\beta\in I\otimes_\CC\pi^*Ob_{X_\albe}|_x,\and 
\bar{ob}_\alpha, \bar{ob}_\beta\in I\otimes_\CC\pi^*\Omega_{X_\albe}|_x.$$ where $\hat{ob}$ and $\bar{ob}$ correspond to the first and the second terms respectively of the right side of \eqref{6.9}. The theorem follows if we show that $\hat{ob}\lalp=\hat{ob}_\beta$ and $\bar{ob}\lalp=\bar{ob}_\beta$.

We first observe that $\hat{ob}\lalp=\hat{ob}_\beta$. This is because they are in fact the obstruction classes for $X\lalp$ and $X_\beta$ given by the morphisms
\[\xymatrix{
\Spec \bar B\ar[r]^{\bar g}\ar@{^(->}[d] & N_\albe\ar[r]^\pi & X_\albe \ar[r]\ar[dr] & X\lalp\ar@{^(->}[r] & A\lalp\\
\Spec B && &X_\beta \ar@{^(->}[r] & A_\beta
}\]
and their extensions
\[
A\lalp\longleftarrow V\lalp^\vee\mapleft{g\lalp} \Spec B\mapright{g_\beta} V_\beta^\vee \longrightarrow A_\beta .
\]
These classes coincide because they arise from a perfect obstruction theory \eqref{6.0}.

Next we show that $\bar{ob}\lalp=\bar{ob}_\beta$. 
Note that the last term of \eqref{6.9} is exactly the pullback of 
$$(ds\lalp)^\vee:V^\vee\lalp\to \Omega_{A\lalp}.$$
The obstruction class $\bar{ob}_\alpha$ is the image of the closed point in $\Spec \bar B$ by the map 
$$\Spec \bar B\mapright{\bar g} N_\albe\hookrightarrow V\lalp^\vee|_{X_\albe}\mapright{(ds\lalp)^\vee} \pi^*\Omega_{A\lalp}|_{X_\albe}\lra \pi^*\Omega_{X_\albe}$$ 
and twisting by $I$. By \eqref{6.3}, we have isomorphisms
\[
[V\lalp^\vee|_{X_\albe}\mapright{(ds\lalp)^\vee} \Omega_{A\lalp}|_{X_\albe}]\cong E|_{X_\albe}\cong
[V_\beta^\vee|_{X_\albe}\mapright{(ds_\beta)^\vee} \Omega_{A_\beta}|_{X_\albe}]
\]
that give rise to a commutative diagram
\beq\label{6.20}\xymatrix{
N_\albe \ar[r]\ar@{=}[d] & [\pi^*V\lalp^\vee|_{X_\albe}   \ar[r]^{(ds\lalp)^\vee}& \pi^*\Omega_{A\lalp}|_{X_\albe}  ]\ar[r] & \pi^*\Omega_{X_\albe}\ar@{=}[d]\\
N_\albe \ar[r]\ar@{=}[d] & [\pi^*E^{-1} \ar[d]\ar[u]  \ar[r]& \pi^*E^0\ar[u]\ar[d]  ]\ar[r] & \pi^*\Omega_{X_\albe}\ar@{=}[d]\\
N_\albe \ar[r] & [\pi^*V_\beta^\vee|_{X_\albe}   \ar[r]^{(ds_\beta)^\vee}& \pi^*\Omega_{A_\beta}|_{X_\albe}  ] \ar[r]& \pi^*\Omega_{X_\albe}
}\eeq
for some locally free sheaves $E^{-1}$ and $E^0$ on $X_\albe$.
The equality $\bar{ob}\lalp=\bar{ob}_\beta$ follows from the commutativity of \eqref{6.20}.
This proves the theorem.  
\end{proof}

We will see below that the semi-perfect obstruction theory \eqref{6.8} of the dual obstruction cone $N$ is $\CC^*$-equivariant and admits a cosection of the obstruction sheaf $Ob_N=\Omega_N$. So we may apply the torus localization and cosection localization proved in this paper.

\begin{ques}\label{6.21} Is the dual obstruction cone $N$ a critical virtual manifold in the sense of \cite[Definition 1.5]{KLp}?\end{ques}

\medskip

\subsection{Localization of the virtual cycle of the dual obstruction cone}\label{S6.3}

Let $X$ be a \emph{scheme} equipped with a perfect obstruction theory $E$ that 
gives us the virtual fundamental  class $[X]\virt$. We assume that $E$ admits a global resolution $[E^{-1}\to E^0]$ by locally free sheaves on $X$. Let $\cF=Ob_X=h^1(E^\vee)$. The natural grading on $\mathrm{Sym}_{\sO_X}\cF$ determines a $T=\CC^*$-action on $N=\Spec_X(\mathrm{Sym} \cF)$ whose fixed locus is precisely $X$. 

The action of $T$ on $N$ moreover gives us the Euler vector field $v$ on $N$ whose dual is a cosection
\[\sigma:\Omega_N\to \sO_N \]
which is surjective on $N-X$ by \cite[\S3]{JiTh}. 

By construction, the semi-perfect obstruction theoy of $N$ in Theorem \ref{6.7} is $T$-equivariant
and the obstruction sheaf is $Ob_N=\Omega_N$. By \eqref{6.17}, the virtual normal bundle of the $T$-fixed locus $X$ in $N$ is the dual $E^\vee$ of $E$ which admits a global resolution by assumption. Therefore, we can apply the localization theorems (Theorems \ref{3.0} and \ref{4.10}) that we proved for semi-perfect obstruction theory.

Jiang and Thomas in \cite{JiTh} considered five natural ways to define the notion of virtual signed Euler characteristic of $X$ as follows:
\begin{enumerate}
\item (Ciocan-Fontanine-Kapranov/Fantechi-G\"ottsche) $$e_1(X)=\int_{[X]\virt} c_{\mathrm{rk}E}(E^\vee),$$
\item (Graber-Pandharipande) $$e_2(X)=\int_{[X]\virt} \frac{c^T_{top}(E^0\otimes\mathbf{t})}{c^T_{top}(E^{-1}\otimes\mathbf{t})},$$
\item (Behrend) $$e_3(X)=e(N,\nu_N)=e(X,\nu_N|_X)$$ where $\nu_N$ is the Behrend function \cite{Behr},
\item (Kiem-Li) $$e_4(X)=\int_{[N]\virt_\loc} 1$$ for the cosection localized virtual cycle $[N]\virt_\loc\in A_0(X)$,
\item $e_5(X)=(-1)^{\mathrm{rk}E}e(X)$.
\end{enumerate}
By Theorems \ref{6.7}, \ref{3.0} and \ref{4.10}, we do not need to assume that $X$ comes from a quasi-smooth derived scheme as in \cite{JiTh}. So the following theorem of Jiang-Thomas holds without any assumption from derived geometry.
\begin{theo} \cite{JiTh}
Let $X$ be a scheme equipped with a perfect obstruction theory $E$ which admits a global resolution $[E^{-1}\to E^0]$ by locally free sheaves. Then the five definitions of virtual signed Euler characteristic are well defined and satisfy
$$e_1(X)=e_2(X),\quad e_3(X)=e_4(X)=e_5(X).$$
\end{theo}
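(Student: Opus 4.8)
The plan is to verify that the dual obstruction cone $N$, equipped with the symmetric semi-perfect obstruction theory \eqref{6.8} produced by Theorem \ref{6.7}, satisfies all the hypotheses of Theorems \ref{3.0} and \ref{4.10}, and then to observe that the proofs of the five equalities in \cite{JiTh} use no ingredient beyond the resulting torus- and cosection-localized virtual cycles together with the \'etale-local description \eqref{6.5}--\eqref{6.6} of $N$ as a critical locus. In particular no \emph{global} perfect obstruction theory on $N$ is needed, and the latter was the only place the quasi-smoothness hypothesis entered in \cite{JiTh}.

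First I would record the structural facts, most of which are already noted in \S\ref{S6.3}. The natural grading of $\mathrm{Sym}_{\sO_X}(Ob_X)$ equips $N$ with a $T=\CC^*$-action whose fixed locus is $X$, and the charts $N\lalp=\mathrm{Crit}(\tilde s\lalp)\subset V\lalp^\vee$ are $T$-equivariant for the fibrewise scaling action, so \eqref{6.8} is a $T$-equivariant semi-perfect obstruction theory in the sense of Definition \ref{4.1}. The Euler vector field of this action is dual to a cosection $\sigma:Ob_N=\Omega_N\to\sO_N$ which is surjective on $N-X$, so $X(\sigma)=X$. By the middle row of \eqref{6.17} the virtual normal bundle of $X$ in $N$ is $N\virt=E^\vee$, and the assumed global resolution $[E^{-1}\to E^0]$ of $E$ supplies the global resolution of $N\virt$ required in Assumption \ref{4.8}. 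Hence Theorem \ref{4.10} applies and produces $[N]\virt$ together with its torus-localization onto $X$, and Theorem \ref{3.0} applies and produces $[N]\virt_\loc\in A_0(X)$ (which has the expected dimension $0$, since the symmetric obstruction theory \eqref{6.6} has rank $0$) with $\imath_*[N]\virt_\loc=[N]\virt$.

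Granting this, the equalities follow exactly as in \cite{JiTh}. Theorem \ref{4.10} applied to $N$ exhibits the expression defining $e_2(X)$ as the genuine torus-localization of $[N]\virt$, so that $e_2(X)=\int_{[X]\virt}e(N\virt)^{-1}$ up to the sign and weight conventions of \cite{JiTh}; since $[X]\virt$ has virtual dimension $\mathrm{rk}\,E$, a routine Chern-class manipulation — which I would not spell out — identifies this with $\int_{[X]\virt}c_{\mathrm{rk}\,E}(E^\vee)=e_1(X)$, and in particular shows that $e_2(X)$ is a well-defined number, independent of $t$ and of the chosen resolution. For the second chain I would use that \eqref{6.8} is symmetric, so on each chart the Behrend function of $N$ agrees with that of the symmetric perfect obstruction theory $F\lalp$ of $N\lalp=\mathrm{Crit}(\tilde s\lalp)$; since $\nu_N$ is \'etale-local this is globally consistent. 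The $\CC^*$-localization of Behrend-weighted Euler characteristics then gives $e_3(X)=e(N,\nu_N)=e(N^T,\nu_N|_{N^T})=e(X,\nu_N|_X)$, which is finite even though $N$ need not be proper. Finally, the cosection-localized Gysin computation of \cite{JiTh}, performed on the charts $N\lalp=\mathrm{Crit}(\tilde s\lalp)$ and glued through the semi-perfect structure, identifies $\deg[N]\virt_\loc$ simultaneously with $e(N,\nu_N)$ and with $(-1)^{\mathrm{rk}\,E}e(X)$, yielding $e_3(X)=e_4(X)=e_5(X)$; along the way $e_3(X)$ and $e_4(X)$ are seen to be well defined, as a Behrend-weighted Euler characteristic of the finite-type scheme $N$ and as the degree of the class furnished by Theorem \ref{3.0}, respectively.

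The main obstacle is this last point. All of the computations of $\nu_N$ and of $[N]\virt_\loc$ in \cite{JiTh} are carried out on the charts $N\lalp=\mathrm{Crit}(\tilde s\lalp)$, and one must check that replacing the global perfect obstruction theory on $N$ (available only under quasi-smoothness) by the semi-perfect obstruction theory \eqref{6.8} does not disturb the gluing: that the local Milnor-fibre and Behrend-function identities are compatible with the gluing isomorphisms $\psi_{\alpha\beta}$ of \eqref{6.10} and with the globally defined cosection $\sigma$, and that the local obstruction cones $\cC_{N\lalp}$ glue to the global cone $\cC_N$ entering $[N]\virt_\loc$. This is essentially forced, because $\nu_N$, $\sigma$ and $\cC_N$ are intrinsic to $N$ and its semi-perfect obstruction theory rather than to any auxiliary global resolution, and because the gluing of obstruction cones was already established in \cite{CLs} and used in Theorems \ref{3.0} and \ref{4.10}; but it is the step that requires care, and it is exactly what allows the quasi-smoothness hypothesis to be dropped.
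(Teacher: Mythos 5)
Your proposal is correct and follows essentially the same route as the paper: §\ref{S6.3} likewise verifies that the semi-perfect obstruction theory of Theorem \ref{6.7} on $N$ is $T$-equivariant, that the Euler-vector-field cosection is surjective off $X$, and that the virtual normal bundle $E^\vee$ of the fixed locus has the required global resolution, so that Theorems \ref{3.0} and \ref{4.10} make $e_2$, $e_3$, $e_4$ well defined, after which the equalities are taken over from \cite{JiTh}. Your extra discussion of how the local critical-chart computations glue is a reasonable elaboration of what the paper leaves implicit, but it is not a different method.
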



\bibliographystyle{amsplain}

\end{document}